\newcommand{\vol}{\mathrm{vol}}
\newcommand{\bb}{\mathbb}
\newcommand{\R}{\bb R}
\newcommand{\Q}{\bb Q}
\newcommand{\Z}{\bb Z}
\newcommand{\N}{\bb N}
\newcommand{\floor}[1]{\left\lfloor#1\right\rfloor}
\newcommand{\ceil}[1]{\left\lceil#1\right\rceil}
\newcommand{\conv}{\operatorname{conv}}
\newcommand{\reccone}{\operatorname{rec}}
\newcommand{\affhull}{\operatorname{aff}}
\newcommand{\interior}{\operatorname{int}}
\newcommand{\aff}{\operatorname{aff}}
\newcommand{\sm}{\setminus}
\newcommand{\AFFIHULL}{\operatorname{\mathsf{AFF-IHULL}}}
\newcommand{\TSAT}{\operatorname{3\mathsf{SAT}}}
\newcommand{\NP}{\operatorname{\mathsf{NP}}}
\newcommand{\relint}{\operatorname{relint}}
\newtheorem{Prop}{Proposition}
\newtheorem{lemma}[Prop]{Lemma}
\newtheorem{cor}[Prop]{Corollary}
\newtheorem{thm}[Prop]{Theorem}
\newtheorem*{claim-no-number}{Claim}
\newenvironment{pf}{\begin{trivlist} \item[] {\em Proof.}}
{\hspace*{\stretch{1}} $\Box$ \end{trivlist}}
\newenvironment{cpf}{\begin{trivlist} \item[] {\em Proof.}}
{\hspace*{\stretch{1}} $\diamond$ \end{trivlist}}
\title{On the convergence of the affine hull\\ of the Chv\'atal-Gomory closures\thanks{This work was supported by the {\em Progetto di Eccellenza 2008--2009} of {\em Fondazione Cassa di Risparmio di Padova e Rovigo}.}}
\author{Gennadiy Averkov\thanks{Institut f\"ur Mathematische Optimierung, Fakult\"at f\"ur Mathematik, Otto-von-Guericke-Universit\"at Magdeburg, Germany.}
\and Michele Conforti\thanks{Dipartimento di Matematica, Universit\`a degli Studi di Padova, Italy.}
\and Alberto Del Pia\thanks{IFOR, Department of Mathematics, ETH Z\"urich, Switzerland.}
\and Marco Di Summa\footnotemark[3]
\and Yuri Faenza\thanks{Institut de math\'ematiques d'analyse et applications, EPFL, Lausanne, Switzerland.}}
\begin{document}

\maketitle

\begin{abstract}
Given an integral polyhedron $P\subseteq\R^n$ and a rational polyhedron $Q\subseteq\R^n$ containing the same integer points as $P$, we investigate how many iterations of the Chv\'atal-Gomory closure operator have to be performed on $Q$ to obtain a polyhedron contained in the affine hull of $P$. We show that if $P$ contains an integer point in its relative interior, then such a number of iterations can be bounded by a function depending only on $n$. On the other hand, we prove that if $P$ is not full-dimensional and does not contain any integer point in its relative interior, then no finite bound on the number of iterations exists.\\[4pt]
{\bf Key words.} affine hull, Chv\'atal-Gomory closure, Chv\'atal rank, cutting plane, integral polyhedron\\[4pt]
{\bf AMS subject classification.} 90C10, 52B20, 52C07
\end{abstract}

\section{Introduction}

The {\em integer hull} $Q_I$ of a polyhedron $Q \subseteq \R^n$ is
defined by $Q_I := \conv(Q \cap \Z^n)$, where ``$\conv{}$'' denotes the convex hull operator.
If $Q$ is a rational polyhedron, then $Q_I$ is a polyhedron \cite{Me} (see also \cite[\S16.2]{sch}).
A polyhedron $Q$ is \emph{integral} if $Q=Q_I$. Given an integral polyhedron $P\subseteq\R^n$, a \emph{relaxation} of $P$ is a rational polyhedron $Q \subseteq \R^n$ such that $Q\cap\Z^n=P\cap\Z^n$. An inequality $cx\le\floor{\delta}$ is a \emph{Chv\'atal-Gomory inequality} (\emph{CG inequality} for short) for a polyhedron $Q\subseteq \R^n$ if $c$ is an integer vector and $cx\le\delta$ is valid for $Q$. Note that $cx\le\floor\delta$ is a valid inequality for $Q\cap\Z^n$ and thus also for $Q_I$. The \emph{CG closure} $Q'$ of $Q$ is the set of points in $Q$ that satisfy all the CG inequalities for $Q$. If $Q$ is a rational polyhedron, then $Q'$ is again a rational polyhedron \cite[Theorem~1]{sch80}.
For $k\in\N$, the \emph{$k$--th CG closure} $Q^{(k)}$ of $Q$ is defined iteratively as $Q^{(k)}:=\left(Q^{(k-1)}\right)'$, with $Q^{(0)}:=Q$. If $Q$ is a rational polyhedron, then there exists a nonnegative integer $p$ such that $Q^{(p)}=Q_I$ \cite[Theorem~2]{sch80}. In other words, the sequence of polyhedra $\left(Q^{(k)}\right)_{k\in \N}$ finitely converges to $Q_I$. The minimum nonnegative integer $p$ for which $Q^{(p)}=Q_I$, called the \emph{CG rank} of $Q$ and denoted by $r(Q)$, can be viewed as the rate of finite convergence of the sequence $\left(Q^{(k)}\right)_{k\in \N}$ to $Q_I$.

%Given a set $S\subseteq\R^n$, the {\em affine hull} of $S$, denoted $\aff(S)$, is the smallest affine subspace containing $S$. The {\em dimension} of $S$ is the dimension of $\aff(S)$. The set $S$ is {\em full-dimensional} if its dimension is $n$.

It is well known that, already in dimension $2$, the CG rank of rational polyhedra can be arbitrarily high. In fact, for $t \in \N$, consider the polyhedron
$Q_t:=\conv(\{(0,0),(0,1),(t,1/2)\})$
(see Figure~\ref{fig:2dim}). Note that $(Q_t)_I=\conv\{(0,0),(0,1)\}$ for all $t \in \N$. It is folklore that
$Q_t\subseteq Q_{t+1}'$, hence by induction $r(Q_t)\geq t$. This example shows actually something stronger: it can take arbitrarily many rounds of the CG closure already for a polyhedron to be contained in the affine hull of its integer points (the {\em affine hull} of a set $S\subseteq\R^n$ is the smallest affine set containing $S$, and throughout the paper it will be denoted by $\aff(S)$). The purpose of this paper is to provide a systematic study of this phenomenon. More precisely, we consider the following question: given an integral polyhedron $P\subseteq\R^n$, is there an integer $p$ such that, for each relaxation $Q$ of $P$, one has $Q^{(p)}\subseteq\aff(P)$?
The example from Figure~\ref{fig:2dim} shows that
such a $p$ does not always exist. However, as our main result, we prove that if $P$ contains an integer point in its relative interior, then such a $p$ exists, and indeed it depends only on $n$.

\begin{figure}
\center
% Generated with LaTeXDraw 2.0.8
% Wed May 30 17:50:26 CEST 2012
% \usepackage[usenames,dvipsnames]{pstricks}
% \usepackage{epsfig}
% \usepackage{pst-grad} % For gradients
% \usepackage{pst-plot} % For axes
\scalebox{.8} % Change this value to rescale the drawing.
{
\begin{pspicture}(0,-2.12)(9.72,2.12)
\definecolor{color738b}{rgb}{0.8,0.8,0.8}
\definecolor{color739b}{rgb}{0.6,0.6,0.6}
\definecolor{color740b}{rgb}{0.4,0.4,0.4}
\psline[linewidth=0.04cm,linestyle=dashed,dash=0.16cm 0.16cm,arrowsize=0.05291667cm 2.0,arrowlength=1.4,arrowinset=0.4]{<-}(1.0,2.1)(1.0,-2.1)
\psline[linewidth=0.04cm,linestyle=dashed,dash=0.16cm 0.16cm,arrowsize=0.05291667cm 2.0,arrowlength=1.4,arrowinset=0.4]{->}(0.0,-0.9)(9.7,-0.88)
\pspolygon[linewidth=0.04,fillstyle=solid,opacity=0.5,fillcolor=color738b](1.0,1.16)(1.0,-0.84)(6.9,0.08)
\pspolygon[linewidth=0.04,fillstyle=solid,opacity=0.5,fillcolor=color739b](0.98,1.12)(0.98,-0.88)(4.96,0.1)
\pspolygon[linewidth=0.04,fillstyle=solid,fillcolor=color740b](0.98,1.14)(0.98,-0.86)(2.92,0.04)
\psdots[dotsize=0.4](1.0,-0.9)
\psdots[dotsize=0.4](1.0,1.1)
\psdots[dotsize=0.4,dotangle=-90.0](3.0,-0.9)
\psdots[dotsize=0.4,dotangle=-90.0](5.0,-0.9)
\psdots[dotsize=0.4,dotangle=-90.0](7.0,-0.9)
\psdots[dotsize=0.4,dotangle=-90.0](9.0,-0.9)
\psdots[dotsize=0.4,dotangle=-90.0](3.0,1.1)
\psdots[dotsize=0.4,dotangle=-90.0](5.0,1.1)
\psdots[dotsize=0.4,dotangle=-90.0](7.0,1.1)
\psdots[dotsize=0.4,dotangle=-90.0](9.0,1.1)
\end{pspicture}
}
\caption{In increasingly lighter shades of grey, polytopes $Q_1$, $Q_2$, and $Q_3$.}\label{fig:2dim}
\end{figure}
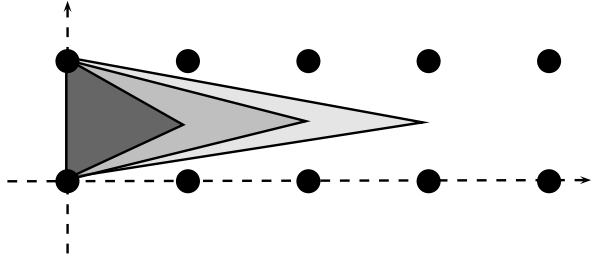

\begin{thm}\label{th:main}
There exists a function $\beta: \N \rightarrow \N$ such that, for each integral polyhedron $P \subseteq \R^n$ containing an integer point in its relative interior and each relaxation $Q$ of $P$, $Q^{(\beta(n))}$ is contained in $\aff(P)$.
\end{thm}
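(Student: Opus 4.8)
The plan is to combine three ingredients: a normalization by a unimodular transformation, a reduction to the behaviour of one facet-defining equation of $\aff(P)$ at a time, and a flatness argument driven by the integer point in the relative interior.

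First I would normalize. Write $d=\dim P$ and $L=\aff(P)$. Since $P$ is integral and has an integer point in $\relint(P)$, $L$ is a rational affine subspace through an integer point, so there is a unimodular (integral, integrally invertible) affine map sending $L$ onto $\{x\in\R^n: x_{d+1}=\dots=x_n=0\}$ and the distinguished interior integer point to the origin. As unimodular maps commute with the CG closure operator and preserve relaxations, I may assume $L=\R^d\times\{0\}^{n-d}$, that $0\in\relint(P)$, and hence that $B\cap L\subseteq P\subseteq Q$ for some ball $B$ around the origin. Because $\aff(P)=\bigcap_{j>d}\{x:x_j=0\}$, it suffices to produce, for each $j\in\{d+1,\dots,n\}$, a bound depending only on $n$ on the number of rounds after which both $x_j\le0$ and $x_j\ge0$ are valid; as the closures form a decreasing chain, one may then take the maximum of these finitely many bounds. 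By symmetry I focus on deriving $x_n\le0$.

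The heart of the argument is to bound the number of rounds after which $\max\{x_n : x\in Q^{(k)}\}<1$, since one further round then yields the CG inequality $x_n\le\floor{\max x_n}=0$. Here the recession cone causes no trouble: since $Q$ is a relaxation, $\reccone(Q)=\reccone(P)\subseteq\R^d\times\{0\}^{n-d}$ is parallel to $L$, so after quotienting out the (rational) lineality space and noting that recession parallel to $L$ is irrelevant to reaching $L$, I may treat $Q$ as bounded in the normal coordinates. The overhang $R=Q\cap\{x_n\ge1\}$ then contains no integer point, because every integer point of $Q$ lies in $P\cap\Z^n\subseteq L$. I would apply the flatness theorem to the lattice-free set $R$ (and, recursively, to its slices): it yields an integer direction in which $R$ is thin, so that $R$ meets only a number of parallel lattice hyperplanes bounded in terms of $n$. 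The plan is to peel these hyperplanes off one at a time, arguing by induction on the dimension: on each lattice hyperplane the slice of $Q$ is a lower-dimensional relaxation-type object, and here the integer point $0\in\relint(P)$ is indispensable, because the full-dimensional ball $B\cap L\subseteq Q$ guarantees that the integer directions produced by flatness can be converted into genuine CG inequalities for $Q$ that actually cut into the overhang (as in the elementary case where the diagonal cuts $x_1+x_n\le1$ and $-x_1+x_n\le1$ lower the height of a triangle over a width-two base). Combining the per-hyperplane and per-coordinate bounds yields a single function $\beta(n)$.

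The main obstacle is precisely this last step: the overhang $R$ can be arbitrarily large and skewed in irrational-looking directions, so that no a priori bound on $\max x_n$ exists, and the flatness direction supplied for $R$ need not be aligned with, or even compatible with, the base $P$ or the target coordinate $x_n$. Controlling the interaction between the flatness direction, the integral base around the origin, and the target inequality — so that each round makes quantifiable progress and the whole recursion terminates after a number of rounds depending on $n$ alone — is the crux; the interior integer point is the hypothesis that makes this interaction work, and is exactly what fails in the segment example of Figure~\ref{fig:2dim}, where $P$ has no integer point in its relative interior.
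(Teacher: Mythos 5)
Your proposal assembles several of the right ingredients (unimodular normalization, flatness, removing one linear equation of $\aff(P)$ at a time), but the step you yourself flag as ``the crux'' is a genuine gap, and the two devices you propose for it cannot work as stated. First, peeling the overhang $R=Q\cap\{x\in\R^n: x_n\ge1\}$ of a \emph{prescribed} coordinate direction fails for two reasons: (i) CG cuts derived from $R$ are not CG cuts for $Q$, because $\{x: x_n\ge1\}$ does not induce a face of $Q$, so Lemma~\ref{lem:Chv-of-face} does not apply and a flat direction for $R$ yields no valid inequality for $Q$ that can be rounded; and (ii) lowering $\max_{x\in Q^{(k)}} x_n$ unit by unit, as in Lemma~\ref{lem:upper}, requires the integer gap $\lfloor\max_{x\in Q} x_n\rfloor-0$ to be bounded by a function of $n$, and no such bound is available for a direction fixed in advance: already in codimension $2$, the projection $\pi(Q)$ onto the orthogonal complement of $\aff(P)$ can be a long thin sliver through the origin that avoids $k\Z^{n-d}\sm\{\mathbf0\}$ yet has enormous width in \emph{every} coordinate direction; flatness only promises one thin integer direction, not a coordinate one. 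Second, your quantitative anchor, a ball $B$ with $B\cap L\subseteq P$, carries no lower bound on its radius in terms of $n$: the parallelograms $P_k=\conv(\{\pm(k,1),\pm(1,0)\})$ discussed after Theorem~\ref{lem:ellipsoid} show that the inradius of $P$ at its interior integer point can be arbitrarily small. Hence every estimate you extract from $B$ depends on $P$, not on $n$ alone, which defeats the very statement being proved.

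The paper closes this gap with two ideas missing from your sketch. (a) Using Pikhurko's bound on the coefficient of asymmetry (Theorem~\ref{thr:pik}), the hypothesis $\relint(P)\cap\Z^n\ne\varnothing$ produces a centrally symmetric convex body $S\subseteq P$, centered at an integer point, whose only integer point is its center and whose volume is at least $\nu(d)$ with $\nu$ depending on the dimension only (Theorem~\ref{lem:ellipsoid}); this is the quantitative replacement for your ball. (b) The thin direction is not prescribed but \emph{found}: if some $\hat x\in Q$ had $\pi(\hat x)\in k\Z^{n-d}\sm\{\mathbf0\}$, then Minkowski's theorem applied to the symmetric body $\conv(S\cup\{\hat x\})\cup-\conv(S\cup\{\hat x\})$, whose volume is at least $2k\nu(d)/(d+1)\ge 2^n$ by the choice $k=\lceil 2^{n-1}n/\nu(n)\rceil$, would force an integer point of $Q$ outside $\aff(P)$, contradicting that $Q$ is a relaxation; hence $\pi(Q)\cap k\Z^{n-d}=\{\mathbf0\}$, and Corollary~\ref{cor:flat-poly} bounds $w(\pi(Q))$ by a function of $n$. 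This yields an integer vector $c'$ orthogonal to $\aff(P)$ in which \emph{all of} $Q$ (not merely an overhang) is thin, so Lemma~\ref{lem:upper} makes $c'x=0$ valid after boundedly many rounds; a unimodular map sending $c'$ to $e^n$ then sets up an induction on the codimension. The coordinate equations you wanted are recovered only a posteriori, one codimension at a time, in an order dictated by the geometry of $Q$ rather than fixed beforehand.
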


The above theorem can be interpreted as follows: for the family of rational polyhedra $Q\subseteq\R^n$ such that $Q_I$ contains an integer point in its relative interior, there is a global upper bound on the rate of finite convergence of the sequence $\left(\aff(Q^{(k)})\right)_{k\in\N}$ to $\aff(Q_I)$, where this global upper bound depends on $n$ only.
Theorem~\ref{th:main} is proved in Section~\ref{sec:thr1}, after some preliminaries in Section~\ref{sec:tools}.

We complement Theorem~\ref{th:main} by generalizing the construction from Figure~\ref{fig:2dim} to any
dimension. More precisely, we show in Section~\ref{sec:thr2} that if
$P\subseteq\R^n$ is not full-dimensional (i.e., its dimension is smaller than $n$) and does not contain any integer point in its relative interior, then there is no $p$ such that $Q^{(p)}\subseteq\aff(P)$ for every relaxation $Q$ of $P$.

\begin{thm}\label{th:main2}
If a non-full-dimensional integral polyhedron $P\subseteq\R^n$ does not contain any integer point in its relative interior, then for each $k \in \N$ there exists a relaxation $Q$ of $P$ such that $Q^{(k)}$ is not contained in $\aff(P)$.
\end{thm}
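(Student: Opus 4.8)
The plan is to reduce the statement to the construction of a single family of polyhedra that imitates the two-dimensional example of Figure~\ref{fig:2dim}. Concretely, I would build polyhedra $Q_1, Q_2, \dots$ enjoying three properties: (i) each $Q_t$ is a relaxation of $P$, that is $Q_t \cap \Z^n = P \cap \Z^n$; (ii) $Q_1 \not\subseteq \aff(P)$; and (iii) $Q_t \subseteq Q_{t+1}'$ for every $t \ge 1$. Granting these, the theorem follows by taking $Q := Q_{k+1}$: since the CG closure is monotone (if $A \subseteq B$ then $A' \subseteq B'$, hence $A^{(j)} \subseteq B^{(j)}$), an induction on $j$ shows $Q_1 \subseteq Q_{1+j}^{(j)}$ for all $j$, the inductive step applying (iii) with $t = 1+j$ and then taking $j$-th closures to get $Q_{1+j}^{(j)} \subseteq (Q_{2+j}')^{(j)} = Q_{2+j}^{(j+1)}$. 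Choosing $j = k$ gives $Q_1 \subseteq Q_{k+1}^{(k)} = Q^{(k)}$, so by (ii) $Q^{(k)} \not\subseteq \aff(P)$.

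To build the family, since $\dim P < n$ I would fix a primitive integer vector $c$ and an integer $\gamma$ with $\aff(P) \subseteq \{x : cx = \gamma\}$; in particular $cx = \gamma$ for every $x \in P$. I would then choose a rational point $a \in \relint(P)$ — which exists, and which by hypothesis is not an integer point — together with an integer vector $h$ satisfying $ch = 1$, and set $w_t := a + t\,h$ and $Q_t := \conv(P \cup \{w_t\})$. Thus the apex $w_t$ climbs exactly one lattice layer of $c$ per unit of $t$, just as $(t,1/2)$ does in Figure~\ref{fig:2dim}. Property (ii) is then immediate, since $c w_1 = \gamma + 1 \ne \gamma$ forces $w_1 \notin \aff(P)$.

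For property (iii) I would argue through the support function $\delta_t(u) := \max\{u x : x \in Q_t\} = \max(\mu(u),\, u w_t)$, where $\mu(u) := \max_{P} u x$: the inclusion $Q_t \subseteq Q_{t+1}'$ amounts to $\delta_t(u) \le \floor{\delta_{t+1}(u)}$ for every $u \in \Z^n$ with $\mu(u)$ finite. Because $P$ is integral, $\mu(u) \in \Z$ whenever it is finite; because $a \in P$, we have $u a \le \mu(u)$; and because $h \in \Z^n$, we have $u w_{t+1} = u w_t + u h$ with $u h \in \Z$. A short case distinction on the sign of $u h$ then yields the desired inequality in every case. Note that this step relies only on $a \in P$ and $h \in \Z^n$, not on the relative-interior hypothesis.

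The main obstacle is property (i), and this is exactly where the absence of integer points in $\relint(P)$ is needed. Since $P \subseteq \{cx = \gamma\}$ and $c w_t = \gamma + t$, every point of $Q_t$ has $c$-value in the interval $[\gamma, \gamma+t]$, and the slice of $Q_t$ in the lattice layer $\{cx = \gamma + j\}$ equals the homothet $(1-\tfrac{j}{t})P + \tfrac{j}{t}w_t$, which I would rewrite as $\bigl(rP + (1-r)a\bigr) + j h$ with $r = 1 - \tfrac{j}{t} \in [0,1)$ and $j h \in \Z^n$. For $a \in \relint(P)$ and $r \in [0,1)$ the homothet $rP + (1-r)a$ is contained in $\relint(P)$, which has no integer point by hypothesis; since an integer translate preserves this, the slice contains no integer point for $1 \le j \le t-1$, while for $j = t$ the slice is the single point $w_t = a + th$, which is non-integral because $a \notin \Z^n$. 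Hence the only integer points of $Q_t$ lie in the base layer $\{cx = \gamma\}$ and are precisely $P \cap \Z^n$, proving (i). The one point needing a little care is the treatment of unbounded $P$ (where $\mu(u)$ may be $+\infty$, or $P$ may contain lines), but such $u$ yield no valid CG inequality and can be discarded.
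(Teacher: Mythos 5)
Your construction is geometrically the same as the paper's: a pyramid over $P$ whose apex sits $k+1$ lattice layers above a point of $\relint(P)$, in a primitive direction $h$ with $ch=1$ (the paper normalizes by a unimodular transformation so that $P\subseteq\R^{n-1}\times\{0\}$ and $h=e^n$), with the relative lattice-freeness of $P$ used exactly as you use it, namely to show the pyramid adds no integer points. Where you differ is in certifying that the apex survives $k$ closures. The paper invokes the Chv\'atal--Cook--Hartmann lemma (Lemma~\ref{lem:iterat-cc}) once, for the single relaxation $Q$, with $x=\bar x+(k+1)e^n$ and $v=e^n$. You instead carry the whole family $Q_1,\dots,Q_{k+1}$, prove the telescoping inclusion $Q_t\subseteq Q_{t+1}'$ by comparing support functions, and conclude by monotonicity and induction. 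Your case analysis on $uh$ is sound: for $uh\ge1$ one uses the integrality of $\mu(u)$ to get $\floor{\delta_{t+1}(u)}\ge\max(\mu(u),uw_t)$, while for $uh=0$ and $uh\le-1$ the fact $ua\le\mu(u)$ forces $\delta_t(u)=\delta_{t+1}(u)=\mu(u)\in\Z$. In effect you have reproved, from scratch, the special case of \cite[Lemma~2.1]{ChCoHa} that the paper cites; this makes your proof self-contained (and you also verify the relaxation property more explicitly than the paper, which leaves $Q\cap\Z^n=P\cap\Z^n$ implicit), at the price of an extra induction over the nested family.

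One real, though easily repaired, defect: when $P$ is unbounded, $\conv(P\cup\{w_t\})$ need not be closed, hence need not be a polyhedron at all, so as written $Q_t$ is not a relaxation and $Q_t'$ is not even defined. Your closing caveat about unboundedness addresses only the support-function step (discarding $u$ with $\mu(u)=+\infty$), not this definitional issue. The repair is the one the paper makes: define $Q_t$ as the topological closure of $\conv(P\cup\{w_t\})$, which is a rational polyhedron by Balas \cite{balas}. Nothing else in your argument changes: support functions are unaffected by taking closures, and in your slice argument the slice at level $\gamma+j$ becomes $\bigl(rP+(1-r)a\bigr)+jh+\reccone(P)$, which still avoids $\Z^n$ because $\relint(P)+\reccone(P)=\relint(P)$ (note that $\reccone(P)\subseteq\{x:cx=0\}$, so the slices are otherwise as you computed).
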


Let us remark that Theorems \ref{th:main} and \ref{th:main2} show a qualitative difference between those non-full-dimensional rational polyhedra that have an integer point in the relative interior of their integer hull, and those which do not. However, we prove that even for the former class, we cannot hope in general to converge to the affine hull after a small number of CG closures: in Section~\ref{sec:bounds} we show the best choice for the function $\beta$ from Theorem~\ref{th:main} is doubly exponential in $n$ ($\beta(n)=2^{2^{\Theta(n)}}$).

We conclude in Section~\ref{sec:complexity}, where we prove the $\NP$-completeness of a decision problem, denoted $\AFFIHULL$, which is closely related to the questions considered above: given $x \in \Q^n$ and a rational polyhedron $Q\subseteq\R^n$ (described by a system of linear inequalities with rational coefficients), decide whether $x \in \aff(Q_I)$.

To the best of our
knowledge, the questions we investigate in this paper have not been
addressed before. In fact, most works on the CG rank of polyhedra aim at
bounding the rank of rational polyhedra contained in the 0-1 cube (see, e.g., \cite{Eisc,PoSc,PoSt,RothSan}).

\section{Tools}\label{sec:tools}

In this section we provide some definitions and auxiliary results that will be used in the proofs of the main theorems.
For standard background material on convex sets, polyhedra, geometry of numbers and integer optimization, we refer the reader to the monographs \cite{Grun,sch,Bar,schn}.

Throughout the paper, $n$ will be a positive integer denoting the dimension of the ambient space.
Given a closed convex set $C\subseteq \R^n$, we denote by $\interior(C)$ the interior of $C$, by $\relint(C)$ the relative interior of $C$, and by $\reccone(C)$ the recession cone of $C$. We say that $C$ is \emph{lattice-free} if $\interior(C) \cap\Z^n= \varnothing$, and \emph{relatively lattice-free} if $\relint(C)\cap\Z^n=\varnothing$. Note that if $C$ is not lattice-free, then it is full-dimensional. A \emph{convex body} is a closed, convex, bounded subset of $\R^n$ with non-empty interior. A set $C\subseteq\R^n$ is {\em centrally symmetric} with respect to a given point $x\in C$ (or centered at $x$) when, for every $y\in\R^n$, one has $x+y\in C$ if and only if $x-y\in C$. When talking about distance and norm, we always refer to the Euclidean distance and the Euclidean norm; in particular, we denote the latter using the standard notation $\|\cdot\|$. For $d\in\{1,\dots,n\}$, when referring to the volume of a $d$--dimensional convex set $C\subseteq\R^n$, denoted $\vol(C)$, we shall always mean its $d$--dimensional volume, that is, the Lebesgue measure with respect to the affine subspace $\aff(C)$ of the
Euclidean space $\R^n$.
For $i\in\N$, we write $e^i$ to denote the unit vector of suitable dimension with 1 in its $i$-th entry and 0 elsewhere.
The all-zero vector of appropriate dimension is denoted by $\mathbf0$.

\subsection{Integer points in convex sets}

We will make use of Minkowski's Convex Body Theorem, which we state below (see, e.g., \cite[Chapter 7, \S3]{Bar}).

\begin{thm}[Minkowski's Convex Body Theorem]\label{thm:MCB}
Let $C\subseteq\R^n$ be a centrally symmetric convex body centered at the origin. If $\vol(C)\ge2^n$, then $C$ contains a non-zero integer point.
\end{thm}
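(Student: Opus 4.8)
The plan is to prove this classical theorem via Blichfeldt's pigeonhole argument, first establishing the version with strict volume inequality $\vol(C)>2^n$ and then recovering the boundary case $\vol(C)=2^n$ by a limiting argument. As a preliminary reduction, I would pass to the scaled body $\tfrac12 C:=\{x/2 : x\in C\}$, which satisfies $\vol(\tfrac12 C)=2^{-n}\vol(C)\ge1$; note also that central symmetry together with convexity forces $\mathbf0\in C$.

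The heart of the matter is the observation that a measurable set of volume strictly greater than $1$ must contain two distinct points whose difference is a nonzero integer vector. To see this, partition $\tfrac12 C$ into the pieces $(\tfrac12 C)\cap(z+[0,1)^n)$ for $z\in\Z^n$ and translate each piece back into the unit cube $[0,1)^n$ by subtracting $z$. If all the translated pieces were pairwise disjoint, their total volume would be at most $\vol([0,1)^n)=1$; since the pieces tile $\tfrac12 C$ up to sets of measure zero, the total volume equals $\vol(\tfrac12 C)$, and a value strictly exceeding $1$ forces two of them to overlap. Thus there are distinct $z_1,z_2\in\Z^n$ and points $p_1,p_2\in\tfrac12 C$ with $p_1-z_1=p_2-z_2$, hence $p_1-p_2=z_1-z_2\in\Z^n\setminus\{\mathbf0\}$. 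To turn this into an integer point of $C$, I would invoke the hypotheses on $C$: from $p_1,p_2\in\tfrac12 C$ we get $2p_1,2p_2\in C$, central symmetry about the origin gives $-2p_2\in C$, and convexity yields $\tfrac12(2p_1)+\tfrac12(-2p_2)=p_1-p_2\in C$. As $p_1-p_2$ is a nonzero integer vector, this settles the case $\vol(C)>2^n$.

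For the equality case $\vol(C)=2^n$ I would argue by compactness. For each $\varepsilon>0$ the dilate $(1+\varepsilon)C$ has volume $(1+\varepsilon)^n2^n>2^n$, so by the strict case it contains a nonzero integer point $v_\varepsilon$. Because $C$ is bounded, all these points lie in a fixed bounded region and hence take only finitely many values, so some nonzero $v\in\Z^n$ satisfies $v\in(1+\varepsilon_k)C$ along a sequence $\varepsilon_k\to0$. Then $v/(1+\varepsilon_k)\in C$ for every $k$, and letting $k\to\infty$, closedness of $C$ gives $v\in C$, as desired.

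The main obstacle I anticipate is precisely this boundary case: the pigeonhole step genuinely needs a strict volume inequality, so the limiting argument — relying on the facts that a convex body is by definition closed and bounded — is essential and cannot be bypassed. The remaining delicate point is the measure-theoretic bookkeeping in the pigeonhole step, namely that the pieces $(\tfrac12 C)\cap(z+[0,1)^n)$ partition $\tfrac12 C$ up to overlaps of measure zero so that their volumes sum to $\vol(\tfrac12 C)$; this is routine but should be stated with care.
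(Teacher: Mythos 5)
Your proof is correct, but note that there is no paper proof to compare it against: the paper quotes Minkowski's Convex Body Theorem as a classical result, pointing to \cite[Chapter~7, \S3]{Bar}, and then uses it as a black box (in the proof of the Claim in Section~\ref{sec:thr1}). What you have written is the standard textbook argument, essentially the one found in the cited reference: Blichfeldt's pigeonhole lemma applied to $\tfrac12 C$, the symmetry-plus-convexity step converting the difference $p_1-p_2$ of two points of $\tfrac12 C$ into a point of $C$, and a dilation/compactness argument to pass from the strict case $\vol(C)>2^n$ to the closed case $\vol(C)\ge 2^n$. All three steps are sound, and your emphasis on the boundary case is well placed, since the paper's statement (and its application to the body $\bar C$ in Section~\ref{sec:thr1}) uses the non-strict inequality, so the limiting argument cannot be skipped; it is exactly here that the paper's convention that a convex body is closed and bounded is needed. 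Two minor remarks: the half-open cubes $z+[0,1)^n$, $z\in\Z^n$, tile $\R^n$ exactly, so the pieces $(\tfrac12 C)\cap(z+[0,1)^n)$ form a genuine partition of $\tfrac12 C$ and the ``up to measure zero'' caveat is unnecessary (only finitely many pieces are nonempty, since $\tfrac12 C$ is bounded); and the observation that $\mathbf0\in C$ is never actually used in your argument, so it can be dropped.
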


The \emph{lattice width} of a closed convex set $C\subseteq\R^n$ is defined (for the integer lattice $\Z^n$) by
\[w(C) := \inf_{c \in \Z^n \sm \{\mathbf0\}}\left\{\sup_{x \in C} cx - \inf_{x \in C} cx \right\}.\]
If $C$ is full-dimensional and $w(C)<+\infty$, then there exists a non-zero integer vector $c$ for which
\[w(C) = \max_{x \in C} cx - \min_{x \in C} cx .\]
The following theorem is due to Khintchine~\cite{Khi} (see also \cite{balipasz,KanLov} and \cite[Chapter~7, \S8]{Bar} for related results and improvements).

\begin{thm}[Flatness Theorem]\label{thm:flat}
For every convex body $C \subseteq \R^n$ with $C \cap \Z^n=\varnothing$, one has $w(C)\leq \omega(n)$, where $\omega$ is a function depending on $n$ only.
\end{thm}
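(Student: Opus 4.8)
The plan is to reduce the general (non-symmetric, non-centered) statement to a question about a ball and a lattice, where Minkowski's Convex Body Theorem (Theorem~\ref{thm:MCB}) and its refinements apply. The quantity to control is the lattice width $w(C)$ of a lattice-free convex body, and the two reductions I have in mind are ``John's theorem'' (to pass from $C$ to an ellipsoid) and ``straightening'' (to pass from the ellipsoid to the Euclidean ball while distorting $\Z^n$ into a general lattice).

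First I would invoke John's theorem: every convex body $C\subseteq\R^n$ contains a unique ellipsoid $E$ of maximal volume, with center $z$, and $C\subseteq z+n(E-z)$. Since lattice width is translation-invariant, positively homogeneous, and monotone under inclusion, from $E\subseteq C\subseteq z+n(E-z)$ we get $w(E)\le w(C)\le n\,w(E)$. Moreover $E\subseteq C$ and $C\cap\Z^n=\varnothing$ force $E\cap\Z^n=\varnothing$. Hence it suffices to bound the lattice width of a lattice-free ellipsoid by a function of $n$, as then $w(C)\le n\,w(E)$ is bounded as well.

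Second I would straighten $E$ into a ball. Writing $E=z+E_0$ with $E_0$ centered at the origin, choose an invertible linear map $T$ with $T(E_0)=B^n$, the Euclidean unit ball, and set $\Lambda:=T(\Z^n)$, a full-rank lattice. Then $T(E)=B(p,1)$ with $p:=Tz$, and $B(p,1)\cap\Lambda=\varnothing$. For $c\in\Z^n$ the width of $E$ in direction $c$ equals $2\sup_{u\in E_0}\langle c,u\rangle=2\|T^{-\mathsf{T}}c\|$; since $T^{-\mathsf{T}}(\Z^n)=\Lambda^\ast$ is exactly the dual lattice, this gives
\[
w(E)=2\min_{c\in\Z^n\sm\{\mathbf0\}}\|T^{-\mathsf{T}}c\|=2\,\lambda_1(\Lambda^\ast),
\]
where $\lambda_1(\Lambda^\ast)$ is the length of a shortest non-zero vector of $\Lambda^\ast$. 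Thus the lattice width of $E$ has been turned into the shortest-vector problem for $\Lambda^\ast$.

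Third I would extract the consequence of lattice-freeness and finish by a transference estimate. As the closed ball $B(p,1)$ contains no point of $\Lambda$, the distance from $p$ to $\Lambda$ exceeds $1$, so the covering radius satisfies $\mu(\Lambda)\ge\mathrm{dist}(p,\Lambda)>1$; combined with the identity above this yields $w(E)=2\lambda_1(\Lambda^\ast)<2\,\mu(\Lambda)\,\lambda_1(\Lambda^\ast)$. It then remains to bound $\mu(\Lambda)\,\lambda_1(\Lambda^\ast)$ by a function of $n$ alone. Using the classical bound $\mu(\Lambda)\le\tfrac12\sum_{i=1}^n\lambda_i(\Lambda)\le\tfrac n2\,\lambda_n(\Lambda)$ (with $\lambda_1\le\cdots\le\lambda_n$ the successive minima of $\Lambda$, the first inequality obtained by rounding in a basis of $\lambda_i$-vectors) together with a transference bound $\lambda_n(\Lambda)\,\lambda_1(\Lambda^\ast)\le f_0(n)$, one obtains $\mu(\Lambda)\,\lambda_1(\Lambda^\ast)\le \tfrac n2 f_0(n)=:f(n)$, and hence $w(C)\le n\,w(E)<2n\,f(n)=:\omega(n)$. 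The main obstacle is precisely this last transference inequality between $\Lambda$ and its dual: it is the only genuinely non-trivial number-theoretic input, derivable from Minkowski's theorems (Theorem~\ref{thm:MCB} and its second-theorem refinement), with the sharpest known forms, giving $f_0(n)=n$, due to Banaszczyk. Everything else is bookkeeping, provided one is careful with the John factor $n$ (which is $\sqrt n$ in the symmetric case but must be taken as $n$ here, since $C$ need not be symmetric) and with the identification $T^{-\mathsf{T}}(\Z^n)=\Lambda^\ast$ of the dual lattice.
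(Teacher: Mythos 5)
Your proof is correct, but it cannot be compared step-by-step with ``the paper's proof'' for a simple reason: the paper does not prove this theorem at all---it imports it as a known result of Khintchine \cite{Khi}, with pointers to \cite{balipasz,KanLov} for quantitative versions. What you have written is a correct rendition of the standard modern proof of the Flatness Theorem: reduce to an ellipsoid via John's theorem (paying the non-symmetric factor $n$ rather than $\sqrt n$, which you correctly flag), straighten the ellipsoid to a unit ball while transporting $\Z^n$ to a general lattice $\Lambda$, observe that the lattice width of the ellipsoid equals $2\lambda_1(\Lambda^\ast)$ (your identification $T^{-\mathsf{T}}(\Z^n)=\Lambda^\ast$ is right), extract $\mu(\Lambda)>1$ from lattice-freeness, and close with a transference bound. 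The chain
$w(C)\le n\,w(E)=2n\,\lambda_1(\Lambda^\ast)<2n\,\mu(\Lambda)\,\lambda_1(\Lambda^\ast)\le n^2\,\lambda_n(\Lambda)\,\lambda_1(\Lambda^\ast)\le n^2 f_0(n)$
is valid, and the two external inputs---John's theorem, and the bound $\lambda_n(\Lambda)\lambda_1(\Lambda^\ast)\le f_0(n)$, which indeed follows from Minkowski's second theorem applied to both $\Lambda$ and $\Lambda^\ast$ together with the pairing inequality $\lambda_i(\Lambda)\lambda_{n-i+1}(\Lambda^\ast)\ge1$---are classical, so invoking them is no worse than what the paper itself does in citing the whole theorem. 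Two cosmetic remarks: the vectors attaining the successive minima need not form a basis of the lattice $\Lambda$, only of $\R^n$, but your rounding argument for $\mu(\Lambda)\le\tfrac12\sum_{i}\lambda_i(\Lambda)$ only needs linear independence, so it stands; and quantitatively your route yields $\omega(n)=O(n^3)$ (taking Banaszczyk's $f_0(n)=n$), which is weaker than the $O(n^{3/2})$ of Banaszczyk--Litvak--Pajor--Szarek cited after Corollary~\ref{cor:flat} but entirely sufficient here, since the paper only ever needs \emph{some} function of $n$.
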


The following is a simple corollary.
\begin{cor}\label{cor:flat}
For every $k\in\N$ and every convex body $C \subseteq \R^n$ with $|C \cap k\Z^n|=1$, one has $w(C)\leq \omega(n,k)$, where $\omega$ is a function depending on $n$ and $k$ only.
\end{cor}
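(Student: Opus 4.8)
The plan is to reduce the single-lattice-point situation to the lattice-free situation already handled by the Flatness Theorem (Theorem~\ref{thm:flat}). First I would rescale. Let $z$ be the unique point of $k\Z^n$ lying in $C$, and set $D:=\tfrac1k(C-z)$. Since $z\in k\Z^n\subseteq\Z^n$ is an integer vector, translating by $-z$ and scaling by $1/k$ carry $k\Z^n$ onto $\Z^n$, so $D$ is a convex body with $D\cap\Z^n=\{\mathbf0\}$. Moreover each directional width $\sup_{x\in C}cx-\inf_{x\in C}cx$ is invariant under arbitrary translations and scales linearly under homothety, so that $w(C)=k\,w(D)$ (throughout, $w$ is taken with respect to $\Z^n$). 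Hence it suffices to bound $w(D)$ by a function of $n$ alone whenever $D$ is a convex body whose only integer point is the origin.

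To invoke Theorem~\ref{thm:flat} I must replace $D$ by a genuinely lattice-free body of comparable width, and here I distinguish two cases according to whether $\mathbf0$ lies on the boundary or in the interior of $D$. If $\mathbf0\in\bd(D)$, then, since $D$ is compact and every other integer point lies at positive distance from $D$, there is a vector $t$ of arbitrarily small norm with $t\notin D$ and $(D-t)\cap\Z^n=\varnothing$; as directional widths are translation-invariant, the Flatness Theorem gives $w(D)=w(D-t)\le\omega(n)$. If instead $\mathbf0\in\interior(D)$, then $\mathbf0$ is the unique interior integer point of $D$, and a standard volume estimate from the geometry of numbers yields $\vol(D)\le V(n)$ for some function $V$ depending on $n$ only. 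Choosing $\lambda:=(2V(n))^{-1/n}$ gives $\vol(\lambda D)\le\tfrac12<1$, so the $\Z^n$-translates of $\lambda D$ cannot cover $\R^n$, because a covering would force $\vol(\lambda D)\ge1$. Consequently some translate $\lambda D+t$ avoids $\Z^n$, and applying the Flatness Theorem to it yields $\lambda\,w(D)=w(\lambda D+t)\le\omega(n)$, i.e. $w(D)\le\omega(n)/\lambda$.

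Combining the two cases, $w(D)\le g(n)$ with $g(n):=\max\bigl\{\omega(n),\,\omega(n)(2V(n))^{1/n}\bigr\}$, and therefore $w(C)=k\,w(D)\le k\,g(n)=:\omega(n,k)$, which is the desired bound. The main obstacle is precisely that $D$ is not lattice-free, so the Flatness Theorem does not apply to it directly; the boundary case is immediate by a small translation, while the interior case is delicate, since there is no local room to translate the integer point away. I expect the crux of a fully rigorous argument to be the a priori volume bound $\vol(D)\le V(n)$ for bodies with a single interior integer point, after which the covering principle and Theorem~\ref{thm:flat} finish the estimate.
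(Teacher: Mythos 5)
Your reduction to $k=1$ and your Case 1 (the unique integer point on the boundary of $D$) are correct, but Case 2 contains a fatal error: the ``standard volume estimate'' $\vol(D)\le V(n)$ for a convex body $D$ whose unique integer point is the origin, lying in $\interior(D)$, is \emph{false}. The bounded-volume theorem you have in mind (Hensley, refined by Lagarias--Ziegler and Pikhurko) applies to \emph{lattice polytopes} with a prescribed number of interior lattice points; for general convex bodies no such bound exists. Concretely, in $\R^2$ take
\[
D_L:=\conv\Bigl(B(\mathbf0,r)\cup\bigl([-L,L]\times[\tfrac14,\tfrac34]\bigr)\Bigr),
\qquad r<\tfrac{1}{1+4L},
\]
where $B(\mathbf0,r)$ is the disk of radius $r$ centered at the origin. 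Then $\mathbf0\in\interior(D_L)$; every point of $D_L$ has second coordinate in $\left]-1,1\right[$, and a short computation shows that any point of $D_L$ at height $0$ is a convex combination $sp+(1-s)q$ with $p\in B(\mathbf0,r)$, $q\in[-L,L]\times[\tfrac14,\tfrac34]$ and $1-s\le 4r$, hence has first coordinate of absolute value at most $r(1+4L)<1$. Thus $D_L\cap\Z^2=\{\mathbf0\}$, yet $\vol(D_L)\ge L$ is unbounded (similar examples exist in every dimension $n\ge2$). Consequently your $\lambda=(2V(n))^{-1/n}$ cannot be chosen as a function of $n$ alone, and the covering argument that follows it --- which is in itself correct --- has nothing to feed on. Since Case 2 is precisely the main case of the corollary (e.g.\ $C$ centered at a point of $k\Z^n$), this is a genuine gap, and you correctly identified the volume bound as the crux; unfortunately it is the one step that is not true.

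The gap can be closed with no volume or covering considerations at all, which is what the paper does: instead of translating the unique point of $C\cap k\Z^n$ to the origin and scaling by $\frac1k$, translate it to $(k,\dots,k)$ and scale by $\frac1{2k}$. If $m\in\frac1{2k}C\cap\Z^n$, then $2km\in C\cap 2k\Z^n\subseteq C\cap k\Z^n=\{(k,\dots,k)\}$, which is impossible since $(k,\dots,k)\notin 2k\Z^n$. Hence $\frac1{2k}C$ contains \emph{no} integer point whatsoever, Theorem~\ref{thm:flat} applies to it directly, and $w(C)=2k\,w\bigl(\tfrac1{2k}C\bigr)\le 2k\,\omega(n)$. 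This parity trick treats interior and boundary positions of the unique lattice point uniformly and eliminates your case distinction entirely.
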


\begin{pf}
Wlog we can assume that the only point in $C \cap k\Z^n$ is the vector $(k,\dots,k)$.
This implies that $\frac{1}{2k}C$ contains no integer point.
By Theorem~\ref{thm:flat}, $w\left(\frac{1}{2k}C\right)$ is bounded by some function depending only on the dimension $n$.
As $w(C) = 2k \cdot w\left(\frac{1}{2k}C\right)$, $w(C)$ is bounded by some function depending only on $n$ and $k$.
\end{pf}

The above proof shows that one can set $\omega(n,k)=2 k \omega(n)$,
with $\omega(n)$ as in Theorem~\ref{thm:flat}. Since one can choose $\omega(n)=O\left(n^{3/2}\right)$ (see \cite[Proposition~2.3 and Theorem~2.4]{balipasz}), we obtain $\omega(n,k)=O\left(kn^{3/2}\right)$ (as both $n,k\to\infty$). We remark that a slightly worse bound of $O\left(kn^2\right)$ follows from \cite[Theorem~(4.1)]{KanLov}.

We will need the result of Corollary~\ref{cor:flat} for (possibly unbounded) rational polyhedra rather than convex bodies (which are bounded by definition). Thus we will make use of the following result.

\begin{cor}\label{cor:flat-poly}
For every $k\in\N$ and every rational polyhedron $Q \subseteq \R^n$ with $|Q \cap k\Z^n|=1$, one has $w(Q)\leq \omega(n,k)$, where $\omega$ is a function depending on $n$ and $k$ only.
\end{cor}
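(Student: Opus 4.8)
The plan is to show that the hypothesis $|Q\cap k\Z^n|=1$ already forces $Q$ to be bounded, which reduces the statement to Corollary~\ref{cor:flat} once the degenerate lower-dimensional case is handled separately.

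First I would argue that $Q$ must be a polytope. Suppose not: then $\reccone(Q)\neq\{\mathbf0\}$, and since $Q$ is rational its recession cone is a rational cone, hence it contains a nonzero integer vector $r\in\Z^n\sm\{\mathbf0\}$. Let $v$ be the unique point of $Q\cap k\Z^n$. For every integer $j\ge 0$ the point $v+jkr$ lies in $Q$ (because $r$ is a recession direction and $v\in Q$) and in $k\Z^n$ (because $v\in k\Z^n$ and $kr\in k\Z^n$). These points are pairwise distinct, so $Q\cap k\Z^n$ would be infinite, contradicting the hypothesis. Hence $\reccone(Q)=\{\mathbf0\}$ and $Q$ is bounded.

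Second, I would dispose of the case where $Q$ is not full-dimensional. In that case $\aff(Q)$ is a proper rational affine subspace, so there exist a nonzero integer vector $c$ and a scalar $\delta$ with $cx=\delta$ for all $x\in Q$; for this particular $c$ the quantity $\max_{x\in Q}cx-\min_{x\in Q}cx$ equals $0$, whence $w(Q)=0$ and the desired bound holds trivially. If instead $Q$ is full-dimensional, then, being a bounded rational polyhedron, it is a convex body containing exactly one point of $k\Z^n$, so Corollary~\ref{cor:flat} applies directly and gives $w(Q)\le\omega(n,k)$ with the very same function $\omega$.

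The only slightly delicate point is the first step: one must use the rationality of $Q$ to extract an \emph{integer} recession direction, and then observe that scaling it by $k$ keeps it inside $k\Z^n$, so that a single $k\Z^n$-point inside an unbounded rational $Q$ automatically spawns infinitely many. Everything else is routine bookkeeping, and no genuinely new flatness argument beyond Corollary~\ref{cor:flat} is required.
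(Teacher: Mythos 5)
Your proof is correct and follows essentially the same route as the paper's: both deduce boundedness of $Q$ from the fact that an unbounded rational polyhedron has an integer vector in its recession cone, which would turn one lattice point into infinitely many (the paper runs this argument on $\frac{1}{k}Q$ against $\Z^n$, you run it on $Q$ against $k\Z^n$), and then both invoke Corollary~\ref{cor:flat}. Your explicit handling of the non-full-dimensional case (where $w(Q)=0$ trivially) is a small point of extra care that the paper's proof glosses over, since Corollary~\ref{cor:flat} formally requires a convex body, i.e., a bounded set with nonempty interior.
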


\begin{pf}
Since $|Q \cap k \Z^n | =1$, we have $\left|\frac{1}{k} Q \cap \Z^n\right| =1$.
So $\frac{1}{k} Q$ is a rational polyhedron containing precisely one
integer point, which we denote by $z$. If $\frac{1}{k} Q$ were unbounded,
then the sum of $z$ and any of the infinitely many integer vectors in the
recession cone of $\frac{1}{k} Q$ would
be in $\frac{1}{k} Q$, contradicting $\left|\frac{1}{k} Q \cap \Z^n\right| =1$.
Hence $\frac{1}{k} Q$ is bounded, which implies that $Q$ is bounded as well.
Then the assumptions of Corollary~\ref{cor:flat} are fulfilled for Q and thus $w(Q) \le \omega(n,k)$.
\end{pf}

Corollary~\ref{cor:flat-poly} can also be proven as a consequence of Corollary~\ref{cor:flat} and an observation of Eisenbrand and Shmonin \cite{EiSh}, who noticed that the Flatness Theorem remains true if $C$ is assumed to be a rational polyhedron instead of a convex body (see the discussion after Theorem 2.1 in \cite{EiSh}).

\subsection{Upper bounds on the CG rank}

The next lemma provides an important property of CG inequalities; for its proof see, e.g., \cite[page~340]{sch}.

\begin{lemma}\label{lem:Chv-of-face}
Let $Q\subseteq \R^n$ be a rational polyhedron and $F$ be a face of $Q$. Then for each $t \in \N$, $F^{(t)}=F \cap Q^{(t)}$.
\end{lemma}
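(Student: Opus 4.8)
The plan is to reduce the statement to the single-step identity $F'=F\cap R'$ for an arbitrary rational polyhedron $R$ and an arbitrary face $F$ of $R$, and then to iterate. I would argue by induction on $t$. The case $t=0$ is immediate, since $F^{(0)}=F=F\cap Q=F\cap Q^{(0)}$. For the inductive step, assume $F^{(t)}=F\cap Q^{(t)}$. First observe that $F\cap Q^{(t)}$ is a face of $Q^{(t)}$: writing $F=Q\cap\{x:ax=b\}$ with $ax\le b$ valid for $Q$ and using $Q^{(t)}\subseteq Q$, one gets $F\cap Q^{(t)}=Q^{(t)}\cap\{x:ax=b\}$, which is exactly the face of $Q^{(t)}$ exposed by $ax\le b$. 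Applying the single-step identity to $R:=Q^{(t)}$ and its face $F\cap Q^{(t)}$ then gives $F^{(t+1)}=(F^{(t)})'=(F\cap Q^{(t)})'=(F\cap Q^{(t)})\cap(Q^{(t)})'=F\cap Q^{(t+1)}$, where the last equality uses $Q^{(t+1)}\subseteq Q^{(t)}$.

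It remains to establish $F'=F\cap R'$. The inclusion $F'\subseteq F\cap R'$ is easy: since $F\subseteq R$, every inequality valid for $R$ is valid for $F$, so every CG inequality for $R$ is also a CG inequality for $F$; hence a point of $F'$, which satisfies all CG inequalities for $F$, in particular satisfies all CG inequalities for $R$ and thus lies in $F\cap R'$. For the reverse inclusion I would fix $x^*\in F\cap R'$ and an arbitrary CG inequality $cx\le\floor\delta$ for $F$ (so $c\in\Z^n$ and $cx\le\delta$ is valid for $F$), and show $cx^*\le\floor\delta$. Write the exposing inequality with $a\in\Z^n$ primitive and $b:=\max\{ax:x\in R\}$, so that $F=R\cap\{x:ax=b\}$.

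The heart of the argument is to manufacture, from the inequality $cx\le\delta$ valid only on the face $F$, a genuine CG inequality for $R$ whose restriction to the hyperplane $\{x:ax=b\}$ recovers $cx\le\floor\delta$. Suppose first that $b\in\Z$. Since $cx\le\delta$ is valid and hence bounded on the nonempty polyhedron $F=\{x\in R:ax=b\}$, LP duality furnishes a real multiplier $\mu^*$ with $(c+\mu^* a)x\le\delta+\mu^* b$ valid for all of $R$. Because $ax\le b$ holds on $R$, raising the multiplier preserves validity: for every integer $\mu\ge\mu^*$ the inequality $(c+\mu a)x\le\delta+\mu b$ is still valid for $R$, and now $c+\mu a\in\Z^n$. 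Rounding down produces the CG inequality $(c+\mu a)x\le\floor{\delta+\mu b}$ for $R$, which $x^*\in R'$ satisfies; evaluating it at $x^*$ and using $ax^*=b$ together with $\mu b\in\Z$ gives $cx^*\le\floor{\delta+\mu b}-\mu b=\floor\delta$, as wanted. The remaining case $b\notin\Z$ is degenerate and both sides are empty: as $a$ is primitive, $ax\le\floor b$ is a CG inequality for $R$, so $F\cap R'\subseteq\{x:ax=b\}\cap\{x:ax\le\floor b\}=\varnothing$; likewise the valid inequalities $ax\le b$ and $ax\ge b$ on $F$ yield the CG inequalities $ax\le\floor b$ and $ax\ge\ceil b$, which are incompatible since $\floor b<\ceil b$, whence $F'=\varnothing$.

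I expect the main obstacle to be precisely this rounding step in the non-degenerate case: one must simultaneously make the tilted functional $c+\mu a$ integral (forcing $\mu\in\Z$, since $a$ is primitive) and keep the tilted inequality valid for $R$. It is the monotonicity of validity under increasing $\mu$, a consequence of $ax\le b$ on $R$, combined with the integrality of $b$, that makes the floor on the right-hand side collapse exactly to $\floor\delta$. One should also check that the dual LP is well posed, i.e. that the primal is feasible and bounded so that strong duality applies; this follows from $F\neq\varnothing$ and the validity of $cx\le\delta$ on $F$, the empty-face case being trivial.
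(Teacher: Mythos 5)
Your proof is correct and takes essentially the same route as the classical argument to which the paper defers (it gives no proof of its own, citing Schrijver, \emph{Theory of Linear and Integer Programming}, p.~340): the single-step identity $F'=F\cap Q'$ established via LP duality and tilting $c$ by an integer multiple of the exposing inequality so that the floor on the right-hand side collapses to $\floor{\delta}$, followed by induction on $t$ using that $F\cap Q^{(t)}$ is a face of $Q^{(t)}$. The only cosmetic difference is that the standard treatment rescales the exposing inequality so that its right-hand side is already an integer, which avoids your separate (and correctly handled) degenerate case $b\notin\Z$.
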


Using the Flatness Theorem and the previous lemma, one can prove the following result \cite[Theorem $1'$]{CCT} (see also \cite[Theorem 23.3]{sch}).

\begin{lemma}\label{lem:PI=0}
The CG rank of every rational polyhedron $Q\subseteq \R^n$ with $Q\cap\Z^n=\varnothing$ is at most $\varphi(n)$, where $\varphi:\N\to\N$ is a function depending on $n$ only.
\end{lemma}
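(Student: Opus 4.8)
The plan is to argue by induction on $n$, using the Flatness Theorem to find a thin integer direction and Lemma~\ref{lem:Chv-of-face} to descend to dimension $n-1$ along the face orthogonal to that direction. First I would record that since $Q\cap\Z^n=\varnothing$ we have $Q_I=\varnothing$, so the claim is exactly that $Q^{(\varphi(n))}=\varnothing$. For the base case $n=1$, a rational polyhedron in $\R$ with no integer point is a bounded interval containing no integer (a ray or line would contain integers), so a single CG closure empties it; thus $\varphi(1)=1$ suffices.

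For the inductive step I would first produce a primitive integer vector $c$ along which $Q$ is flat. By the Flatness Theorem in the form valid for rational polyhedra (as noted after \cite{EiSh} and recorded above), the lattice width satisfies $w(Q)\le\omega(n)$, so there is a primitive $c\in\Z^n\sm\{\mathbf0\}$ with $\max_{x\in Q}cx-\min_{x\in Q}cx\le\omega(n)$; finiteness of the width forces $cx$ to be bounded on $Q$, so both optima are attained. Put $a:=\ceil{\min_{x\in Q}cx}$ and $b:=\floor{\max_{x\in Q}cx}$. The first CG closure yields $Q'\subseteq\{x:a\le cx\le b\}$, since $cx\le b$ and $-cx\le -a$ are CG inequalities for $Q$. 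If $a>b$ then $Q'=\varnothing$ and we are done, so assume $a\le b$, noting $b-a\le\omega(n)$.

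The heart of the argument is a peeling claim: from any polyhedron $R$ with $R\cap\Z^n=\varnothing$ and $R\subseteq\{cx\le m\}$, $m\in\Z$, a block of $\varphi(n-1)+1$ further closures produces a polyhedron contained in $\{cx\le m-1\}$. Let $F:=R\cap\{x:cx=m\}$ be the (possibly empty) face of $R$ maximizing $cx$. Since $c$ is primitive and $m$ integer, $\{x:cx=m\}$ is a lattice hyperplane: $\{x\in\Z^n:cx=m\}$ is a coset of a rank-$(n-1)$ sublattice unimodularly equivalent to $\Z^{n-1}$, and every integer functional on this hyperplane extends to an integer functional on $\R^n$. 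Consequently the ambient CG closure of $F$ coincides with its intrinsic CG closure in the hyperplane, identified with $\R^{n-1}$; as $F\cap\Z^n\subseteq R\cap\Z^n=\varnothing$, the induction hypothesis gives $F^{(\varphi(n-1))}=\varnothing$. By Lemma~\ref{lem:Chv-of-face}, $F^{(\varphi(n-1))}=F\cap R^{(\varphi(n-1))}$, so $R^{(\varphi(n-1))}$ avoids $\{cx=m\}$; being a polyhedron inside $\{cx\le m\}$ that does not meet $\{cx=m\}$, it attains $\max cx$ at some value strictly below $m$, and one further closure pushes it into $\{cx\le m-1\}$.

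Finally I would iterate the peeling claim. Starting from $Q'\subseteq\{a\le cx\le b\}$ and applying the claim $b-a+1$ times drives the polyhedron into $\{cx\le a-1\}$; since the bound $cx\ge a$ is preserved by every closure, the outcome is empty. Counting closures gives $Q^{(p)}=\varnothing$ with $p\le 1+(b-a+1)(\varphi(n-1)+1)\le 1+(\omega(n)+1)(\varphi(n-1)+1)=:\varphi(n)$, which depends only on $n$. I expect the main obstacle to be the dimension reduction inside the peeling step: one must verify carefully that the CG closure of the lower-dimensional face $F$ computed in the ambient space $\R^n$ agrees with the closure computed intrinsically in the lattice hyperplane, so that the induction hypothesis genuinely applies. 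This identification, together with the mild unboundedness subtleties in invoking the Flatness Theorem for polyhedra, is where the care lies, whereas Lemma~\ref{lem:Chv-of-face} handles the passage between the face and the ambient polyhedron cleanly.
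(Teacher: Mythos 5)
Your proof is correct, and it is essentially the argument the paper points to: the paper does not prove Lemma~\ref{lem:PI=0} itself but cites \cite[Theorem~$1'$]{CCT} and \cite[Theorem~23.3]{sch}, noting that the proof combines the Flatness Theorem with Lemma~\ref{lem:Chv-of-face}, which is precisely your induction on dimension with flat-direction peeling. The points you flag as delicate (flatness for unbounded rational polyhedra, and the agreement of the ambient and intrinsic CG closures on a lattice hyperplane) are handled correctly and are the same ones the classical proof addresses.
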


Wlog we assume that the values $\varphi(n)$ from Lemma~\ref{lem:PI=0} are non-decreasing in $n$.
We use the previous results to show the following upper bound on the CG rank.

\begin{lemma}\label{lem:upper}
For every rational polyhedron $Q \subseteq \R^n$
and every $c \in \Z^n$ and $\delta,\delta'\in\R$ (with $\delta'\ge\delta$) such that $cx \le
\delta$ is valid for $Q_I$ and $cx \le \delta'$ is valid for $Q$, the inequality $cx
\le \delta$ is valid for $Q^{(p+1)}$, where $p=(\lfloor \delta'\rfloor - \lfloor\delta
\rfloor) \theta(n)$ and $\theta:\N\to\N$ is a function depending on $n$ only.
\end{lemma}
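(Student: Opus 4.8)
We have a rational polyhedron $Q$, an integer vector $c$, and two bounds $\delta \leq \delta'$. The inequality $cx \leq \delta$ is valid for $Q_I$ (the integer hull), while only the weaker $cx \leq \delta'$ is valid for $Q$ itself. The claim is that after about $(\lfloor\delta'\rfloor - \lfloor\delta\rfloor)\theta(n)$ rounds of the CG closure, we can push the bound from $\delta'$ all the way down to $\delta$.

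**My proof plan:**

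The plan is to induct on the integer gap $\lfloor\delta'\rfloor - \lfloor\delta\rfloor$, peeling off one unit at a time, where each unit costs at most $\theta(n)$ rounds of the CG closure. The base case is when $\lfloor\delta'\rfloor = \lfloor\delta\rfloor$: here a single CG inequality does the job, since $cx \leq \delta'$ valid for $Q$ gives the CG inequality $cx \leq \lfloor\delta'\rfloor = \lfloor\delta\rfloor \leq$ (well, we need $cx \leq \delta$, but $\delta$ may be non-integral, so the CG inequality gives $cx \leq \lfloor\delta\rfloor$, and since $\lfloor\delta\rfloor \leq \delta$ this is at least as strong — actually it gives us more than we need for the bound $\delta$, but we should be careful that the target is $cx\le\delta$). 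So the base case needs essentially one round.

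First I would reduce the inductive step to the following core claim: if $cx \leq \delta'$ is valid for a polyhedron $R$ (playing the role of some iterated closure $Q^{(j)}$) with $\lfloor\delta'\rfloor > \lfloor\delta\rfloor$, then after $\theta(n)$ rounds the bound improves to $cx \leq \delta'-1$ (or to $\lfloor\delta'\rfloor - 1$), strictly decreasing the integer part of the gap. The key tool here is Lemma~\ref{lem:Chv-of-face} together with Lemma~\ref{lem:PI=0}. The idea is to look at the face (or slice) $F := R \cap \{x : cx = \delta'\}$ — or more precisely the slab where $\lfloor\delta'\rfloor < cx \le \delta'$. Since $cx \leq \delta$ is valid for $Q_I$ and $\lfloor\delta\rfloor < \lfloor\delta'\rfloor$, the hyperplane $\{cx = \delta'\}$, indeed the whole region $\{cx > \lfloor\delta'\rfloor - 1 \ge \delta\}$... contains no integer points of $Q$ in the relevant range. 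So the slice $F$ satisfies $F \cap \Z^n = \varnothing$, and by Lemma~\ref{lem:PI=0} its CG rank is at most $\varphi(n)$, meaning $F^{(\varphi(n))} = \varnothing$. Then Lemma~\ref{lem:Chv-of-face} transfers this: $F^{(\varphi(n))} = F \cap R^{(\varphi(n))} = \varnothing$ says the face-slice is cut away, which should force the bound on $cx$ to drop below $\delta'$ after $\varphi(n)$-plus-a-few rounds. This is where I would set $\theta(n) := \varphi(n) + O(1)$.

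The main obstacle I anticipate is making precise the passage from "the slice at the top is lattice-free and hence emptied after $\varphi(n)$ rounds" to "the valid bound on $cx$ strictly decreases by one unit." The subtlety is that $F$ as literally defined (the face $cx = \delta'$) may not be a face of $R$ if $\delta'$ is not the maximum of $cx$ over $R$ — but here $cx \le \delta'$ is valid, so if it is tight, $F$ is a genuine face and Lemma~\ref{lem:Chv-of-face} applies directly; if it is not tight, then the maximum is already below $\delta'$ and the gap is smaller, which only helps. I would therefore take $\delta'$ to be the true maximum $\max_{x \in R} cx$ (replacing $\delta'$ by this smaller value loses nothing), so $F$ is a face, apply Lemma~\ref{lem:PI=0} to get $F^{(\varphi(n))} = \varnothing$, and conclude via Lemma~\ref{lem:Chv-of-face} that $R^{(\varphi(n))}$ satisfies $cx < \delta'$; rounding then gives $cx \le \lceil\delta'\rceil - 1$ after one further CG round, which decreases $\lfloor\cdot\rfloor$ by at least one. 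Iterating the inductive step $\lfloor\delta'\rfloor - \lfloor\delta\rfloor$ times, each costing $\theta(n) = \varphi(n)+1$ rounds, yields validity of $cx \le \delta$ at level $p+1$ with $p = (\lfloor\delta'\rfloor - \lfloor\delta\rfloor)\theta(n)$, as required.
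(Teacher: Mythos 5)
Your strategy is the same as the paper's: peel the bound down one unit at a time, where each unit is paid for by applying Lemma~\ref{lem:PI=0} to a lattice-free face at the top of the polyhedron and transferring the conclusion back via Lemma~\ref{lem:Chv-of-face}, so that $\theta(n)\approx\varphi(n)+1$. Your identification of the face (at the true maximum of $cx$ over the current closure $R$), the verification that it contains no integer points (integer points of $R$ are integer points of $Q$, hence satisfy $cx\le\lfloor\delta\rfloor$, strictly below the level of the face), and the transfer $F^{(\varphi(n))}=F\cap R^{(\varphi(n))}=\varnothing$ are all correct and constitute exactly the paper's mechanism.

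However, your final accounting contains a genuine error. You claim that one inductive step (empty the face at the maximum $\mu$, then round) yields $cx\le\lceil\mu\rceil-1$, ``which decreases $\lfloor\cdot\rfloor$ by at least one.'' This is false whenever $\mu$ is not an integer: in that case $\lceil\mu\rceil-1=\lfloor\mu\rfloor$, so the floor of the valid bound does not decrease at all --- the step merely replaces a fractional bound by its floor. Concretely, with $\delta=\lfloor\delta\rfloor$ and $\delta'=\mu=\lfloor\delta\rfloor+1.5$, your induction needs two full batches of $\varphi(n)+1$ closures, while your tally ($g=1$ batch, i.e.\ $p+1=\varphi(n)+2$ rounds with $\theta(n)=\varphi(n)+1$) allows only one; in general you need $g+1$ batches rather than $g$, where $g=\lfloor\delta'\rfloor-\lfloor\delta\rfloor$. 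The lemma itself survives, since $\theta$ is only required to depend on $n$: either keep your argument and set $\theta(n)=2(\varphi(n)+1)$, observing that $(g+1)(\varphi(n)+1)\le 2g(\varphi(n)+1)+1$ for $g\ge1$ and handling $g=0$ by a single CG round; or, more cleanly, do what the paper does --- spend one CG round at the very start to obtain the \emph{integral} bound $cx\le\lfloor\delta'\rfloor$ (this is the ``$+1$'' in $p+1$), and then apply the face argument only at integral levels $\lfloor\delta'\rfloor-k$, so that each batch of $\varphi(n-1)+1$ rounds provably decreases an integer bound by exactly one. (The paper's use of $\varphi(n-1)$ instead of your $\varphi(n)$, exploiting that the face lies in a hyperplane, is a cosmetic difference.)
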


\begin{pf}
We assume $n\ge2$, otherwise the statement is trivially verified.
We show that, for each integer $0\le k
\leq \lfloor\delta'\rfloor-\lfloor\delta\rfloor $, the inequality $cx\leq \lfloor \delta' \rfloor -k$ is valid for $Q^{(k\varphi(n-1)+k+1)}$, with
$\varphi$ being the function from
Lemma~\ref{lem:PI=0}.
Note that $Q^{(1)}\subseteq \{x \in \R^n: cx\leq
\lfloor \delta' \rfloor\}$. If $\lfloor\delta'\rfloor\le\delta$, the proof is complete.
Otherwise, the hyperplane $\{x \in \R^n: cx
= \lfloor \delta' \rfloor\}$ induces a face $F$ of $Q^{(1)}$
of dimension at most $n-1$ containing no integer points. From
Lemmas~\ref{lem:Chv-of-face} and~\ref{lem:PI=0} we conclude that $\{x
\in \R^n: cx = \lfloor \delta' \rfloor\}\cap
Q^{(\varphi(n-1)+1)}=\varnothing$, and consequently
$Q^{(\varphi(n-1)+2)}\subseteq \{x \in \R^n: cx\leq \lfloor \delta'
\rfloor -1\}$. By iterating this argument, we obtain
$Q^{(k\varphi(n-1)+k+1)}\subseteq \{x \in \R^n: cx\leq \lfloor \delta'
\rfloor - k\}$ for $0\le k \leq \lfloor \delta'\rfloor - \lfloor\delta\rfloor$.
In particular, when $k=\lfloor \delta'\rfloor - \lfloor\delta\rfloor$
one has that $cx \leq \lfloor\delta\rfloor$ is valid for $Q^{((\lfloor \delta'\rfloor - \lfloor\delta\rfloor)\theta(n)+1)}$
with $\theta(n):=\varphi(n-1)+1$.
\end{pf}

\subsection{Unimodular transformations}

A \emph{unimodular transformation} $u: \R^n \rightarrow \R^n$ maps a point $x \in \R^n$ to $u(x)=Ux + v$, where $U$ is a unimodular $n\times n$ matrix (i.e., a square integer matrix with $|\det(U)|=1$) and $v\in \Z^n$. It is well-known (see, e.g., \cite[Theorem~4.3]{sch}) that a non-singular matrix $U$ is unimodular if and only if so is $U^{-1}$. Furthermore, a unimodular transformation is a bijection of both $\R^n$ and $\Z^n$ that preserves $n$--dimensional volumes. Moreover, the following holds (see \cite[Lemma~4.3]{Eisc}).

\begin{lemma}\label{obs:unimodular-preserves}
If $Q \subseteq \R^n$ is a rational polyhedron,
$t\in\N$ and $u:\R^n\to\R^n$ is a unimodular transformation, then $u\left(Q^{(t)}\right)=u(Q)^{(t)}$. In particular, the CG ranks of Q
and u(Q) coincide.

\end{lemma}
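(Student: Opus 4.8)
The plan is to reduce everything to the single-round statement $u(Q')=u(Q)'$ and then induct on $t$. For the single round, I would track how a Chv\'atal--Gomory inequality transforms under $u$. Write $u(x)=Ux+v$ with $U$ unimodular and $v\in\Z^n$, and set $y=u(x)$, so that $x=U^{-1}(y-v)$ and $U^{-1}$ is again an integer (unimodular) matrix. Given an integer vector $c$ and $\delta\in\R$ with $cx\le\delta$ valid for $Q$, substituting $x=U^{-1}(y-v)$ shows that the same half-space, expressed in the $y$ coordinates, is $c'y\le\delta'$ with $c':=cU^{-1}$ and $\delta':=\delta+c'v$, and this inequality is valid for $u(Q)$. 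The crucial arithmetic point is that $c'$ is an integer vector (product of the integer vector $c$ and the integer matrix $U^{-1}$) and that $c'v\in\Z$, so that $\lfloor\delta'\rfloor=\lfloor\delta\rfloor+c'v$.

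Next I would observe that the assignment $c\mapsto c'=cU^{-1}$ is a bijection of $\Z^n$, with inverse $c'\mapsto c'U$, because both $U$ and $U^{-1}$ are integer matrices. Hence this correspondence is a bijection between the CG inequalities of $Q$ and those of $u(Q)$. Moreover, a point $x$ satisfies the CG inequality $cx\le\lfloor\delta\rfloor$ of $Q$ if and only if its image $y=u(x)$ satisfies the corresponding CG inequality $c'y\le\lfloor\delta'\rfloor$ of $u(Q)$: indeed $c'y\le\lfloor\delta'\rfloor$ rewrites as $cU^{-1}y\le\lfloor\delta\rfloor+c'v$, i.e. $cU^{-1}(y-v)=cx\le\lfloor\delta\rfloor$. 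Since $u$ is a bijection of $\R^n$ carrying $Q$ onto $u(Q)$, intersecting over all CG inequalities yields $u(Q')=u(Q)'$.

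Finally I would conclude by induction on $t$. The case $t=0$ is immediate, and for the inductive step I would apply the single-round identity to the polyhedron $Q^{(t-1)}$: using $Q^{(t)}=\left(Q^{(t-1)}\right)'$ and the induction hypothesis $u\left(Q^{(t-1)}\right)=u(Q)^{(t-1)}$, one gets $u\left(Q^{(t)}\right)=u\left(\left(Q^{(t-1)}\right)'\right)=\left(u\left(Q^{(t-1)}\right)\right)'=\left(u(Q)^{(t-1)}\right)'=u(Q)^{(t)}$. The claim about CG ranks then follows, since $u$ is an affine bijection of $\Z^n$, so that $u(Q_I)=\conv\left(u(Q)\cap\Z^n\right)=u(Q)_I$, and hence $Q^{(t)}=Q_I$ holds if and only if $u(Q)^{(t)}=u(Q)_I$. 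I do not expect a genuine obstacle here; the only point demanding care is the integrality of $c'$ and of $c'v$, which is precisely what makes the floor operation commute with $u$ and thus makes the whole argument go through.
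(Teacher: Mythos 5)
Your proof is correct, and every step holds up: the integrality of $c'=cU^{-1}$ (because $U^{-1}$ is again an integer matrix) and of $c'v$, the resulting identity $\lfloor\delta'\rfloor=\lfloor\delta\rfloor+c'v$, the bijectivity of $c\mapsto cU^{-1}$ on $\Z^n$, and the final induction and the identity $u(Q_I)=u(Q)_I$ are all exactly what is needed. Note, however, that the paper does not prove this lemma at all: it simply cites \cite[Lemma~4.3]{Eisc}, so there is no in-paper argument to compare against. Your write-up is the standard self-contained argument that underlies that reference --- the one genuinely delicate point being precisely the one you flag, namely that unimodularity makes the floor operation commute with the change of coordinates; without integrality of $U^{-1}$ and $v$ the correspondence between CG inequalities would break down. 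So your proposal supplies a complete elementary proof where the paper relies on a citation.
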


Because of the previous lemma, when investigating the CG rank of a $d$--dimensional rational polyhedron $Q\subseteq \R^n$ with $Q\cap\Z^n \neq \varnothing$, we can apply a suitable unimodular transformation and assume that $Q$ is contained in the rational subspace $\R^d \times \{0\}^{n-d}=\{x \in \R^n: x_{d+1}=x_{d+2}=\dots=x_n=0\}$.

\subsection{Centrally symmetric convex bodies in integral polyhedra}

%For $r \in \Q_+$, $x \in \R^n$ and an affine subspace $H$ of $\R^n$ of dimension $d$, the \emph{$d$--ball} (\emph{of radius $r$ lying on $H$ and centered at $x$}) is the set of points lying on $H$ whose distance from $x$ is at most $r$. A \emph{$d$--ellipsoid} is the image of a $d$--ball under some invertible affine transformation, and its \emph{center} is the image of $x$. In particular, a unimodular transformation maps ellipsoids into ellipsoids of the same volume.

%Next, we state the \emph{John Ellipsoid Theorem}, in its version for centrally symmetric convex bodies (see, e.g., \cite{Bar}).

%\begin{thm}[John Ellipsoid Theorem for centrally symmetric bodies]\label{thr:john-ell}
%Let $K\subseteq \R^n$ be a centrally symmetric convex body. Then there exists a unique $n$--ellipsoid of maximum volume contained in $K$. This ellipsoid is centered at the origin and has volume at least $\vol(K)/\sqrt n$.
%\end{thm}

Given a convex body $C\subseteq\R^n$ and a point $x\in \interior(C)$, we define the \emph{coefficient of asymmetry of $C$ with respect to $x$} to be the value
\[\gamma(C,x):=\max_{z \in \R^n:\|z\|=1}\:\frac{\max\{\lambda:x+\lambda z\in C\}}{\max\{\lambda:x-\lambda z\in C\}}.\]
Note that $\gamma(C,x)\ge 1$, and $\gamma(C,x)=1$ if and only if $C$ is centrally symmetric with respect to $x$. Furthermore, assuming for the sake of simplicity that $x$ is the origin, $\gamma(C,x)$ is the smallest number $\gamma$ for which $\frac1\gamma C\subseteq -C$.

The following result was proved by Pikhurko~\cite[Theorem 4]{pik}.

\begin{thm}\label{thr:pik}
There exists a function $\sigma: \N \rightarrow ]0,+\infty[$ such that every non-lattice-free integral polytope $P\subseteq\R^n$ contains a point $x\in\interior(P)\cap\Z^n$ satisfying $\gamma(P,x)\le \sigma(n)$.
\end{thm}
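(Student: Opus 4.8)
The plan is to prove, by induction on $n$, a stronger sublattice version of the statement, carrying a modulus $k$ through the induction exactly as in Corollary~\ref{cor:flat}: for every $k\in\N$ there is a constant $\sigma(n,k)$ such that every polytope $R\subseteq\R^n$ with vertices in $\Z^n$ that contains a point of $k\Z^n$ in its interior also contains a point $x\in\interior(R)\cap k\Z^n$ with $\gamma(R,x)\le\sigma(n,k)$; the theorem is the case $k=1$. The base case $n=1$ is immediate: here $R=[a,b]$ with $a,b\in\Z$, and choosing the multiple of $k$ in $(a,b)$ closest to the midpoint yields $\gamma(R,x)\le k$, precisely because integrality forces the endpoints to be integers (the statement is false for arbitrary non-integral intervals).

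For the inductive step I would first produce a good non-lattice center and then round it. By a classical result (John's theorem, or the centroid estimate) there is a point $c\in\interior(R)$ and an ellipsoid $E$ centered at $c$ with $E\subseteq R\subseteq c+n(E-c)$, so that $\gamma(R,c)\le n$. A short computation shows that any $x\in c+\tfrac12(E-c)$ satisfies $x+\tfrac12(E-c)\subseteq R\subseteq c+n(E-c)$, and hence $\gamma(R,x)\le 2n+1$; thus it suffices to find a point of $k\Z^n$ in the half-ellipsoid $c+\tfrac12(E-c)$. This yields a fat/thin dichotomy governed by the Flatness Theorem. If $c+\tfrac12(E-c)$ contains a point of $k\Z^n$ we are done with $\sigma(n,k)=2n+1$. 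Otherwise the recentered body $\tfrac{1}{2k}(E-c)$ contains no integer point, so Theorem~\ref{thm:flat} bounds its lattice width; since $R\subseteq c+n(E-c)$, the lattice width of $R$ is then bounded by some $W=W(n,k)$, and because the vertices of $R$ are integral the width in the flat direction is attained at integers. Applying a unimodular transformation (Lemma~\ref{obs:unimodular-preserves}, which preserves $\gamma$ and both lattices) I may assume this direction is $e^n$, so $R$ lies in a slab $\{\ell\le x_n\le L\}$ with $\ell,L\in\Z$ and $L-\ell\le W$.

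The heart of the argument is the thin case, whose main obstacle is that the slices $R_h:=R\cap\{x_n=h\}$ of an integral polytope are in general not integral. The fix exploits the bounded width: every vertex of $R_h$ lies on an edge of $R$ joining two integral vertices whose $x_n$-coordinates differ by at most $W$, so its coordinates have denominator dividing $D:=\operatorname{lcm}(1,\dots,W)$. Hence, writing $h_0$ for an integer height carrying a $k\Z^{n-1}$-point $q_0$ in the relative interior of $R_{h_0}$ (which exists because $R$ has an interior $k\Z^n$-point and $\ell<h_0<L$), the rescaled polytope $D\cdot R_{h_0}$ has integral vertices and contains the point $Dq_0$ of the sublattice $kD\,\Z^{n-1}$ in its relative interior. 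The inductive hypothesis applied in dimension $n-1$ with modulus $kD$ then produces an interior lattice point $q^*\in\Z^{n-1}$ of $R_{h_0}$ with $\gamma(R_{h_0},q^*)\le\sigma(n-1,kD)$; note that the modulus grows but stays bounded in terms of $n$ and $k$, so the induction on $n$ closes.

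Finally I would lift $q^*$ to $x^*=(q^*,h_0)$, which lies in $\interior(R)$ since $q^*\in\relint(R_{h_0})$ and $\ell<h_0<L$, and bound $\gamma(R,x^*)$ by combining the in-slice estimate $\gamma(R_{h_0},q^*)\le\sigma(n-1,kD)$ (controlling horizontal directions) with vertical control coming from the bounded width $L-\ell\le W$ together with a judicious choice of $h_0$ (e.g.\ the integer nearest $c_n$), which keeps the upward and downward reaches from $x^*$ balanced and bounded. This last combination — turning separate horizontal and vertical estimates into a single bound on $\gamma(R,x^*)$ — is the step I expect to require the most care: one must verify, using convexity of the fibre-length function and the boundedness of the width, that the vertical reaches are neither too large nor too small relative to the inscribed ellipsoid, thereby expressing $\sigma(n,k)$ as a function of $\sigma(n-1,kD)$, $n$ and $k$.
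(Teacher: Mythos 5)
First, a point of context: the paper does not prove this statement at all --- Theorem~\ref{thr:pik} is quoted from Pikhurko \cite[Theorem~4]{pik}, whose proof is a substantial piece of work yielding $\sigma(n)=8n\cdot 15^{2^{2n+1}}$. So your proposal is an independent proof attempt, and must be judged on its own. Its architecture is sound, and most steps check out: the John-ellipsoid/flatness dichotomy is correct (modulo a slip: flatness must be applied to $\frac{c}{k}+\frac{1}{2k}(E-c)$, not to $\frac{1}{2k}(E-c)$, which contains the origin), the observation that slice vertices have denominators dividing $D=\operatorname{lcm}(1,\dots,W)$ is correct, and the sublattice-strengthened induction is set up correctly (the induction hands you a point of $k\Z^{n-1}$, not merely $\Z^{n-1}$, which is what you need).

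The genuine gap is the final combination step, which you leave unproven --- and the mechanism you sketch for it cannot work. First, $h_0$ is not a free parameter: for $x^*$ to lie in $k\Z^n$ and for the induction to apply, $h_0$ must be a height in $k\Z$ whose slice contains a point of $k\Z^{n-1}$ in its relative interior, and the only such height your hypotheses guarantee is that of the given interior point; ``the integer nearest $c_n$'' is not available. Second, and fatally for your sketch, there is no positive lower bound on the vertical reaches at $x^*$: take the integral parallelogram $R_N=\conv\{(-N,0),(-N+1,0),(N-1,3),(N,3)\}$ with $N\not\equiv 2\pmod 3$ and $h_0=1$. It has width $3$ in the slicing direction $e^2$, its height-$1$ slice is a segment of length $1$ containing one lattice point in its relative interior, and at \emph{every} point of that slice both vertical reaches are at most $3/(2N-1)$. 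So any argument premised on the vertical reaches being ``neither too large nor too small'' is doomed. (In this example $\gamma(R_N,x^*)=2$, so small reaches do not contradict the conclusion; they only kill your route to it.)

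The good news is that the inequality you need is true, with a short proof using no vertical-reach information, so your plan does close. Claim: if $R\subseteq\{\ell\le x_n\le L\}$ with $\ell,L\in\Z$, $L-\ell\le W$, $h_0\in\Z$ with $\ell<h_0<L$, and $q^*\in\relint(R_{h_0})$ has in-slice asymmetry at most $\gamma_0$, then $\gamma(R,x^*)\le W(\gamma_0+1)$ for $x^*=(q^*,h_0)$. Write points of $R$ as $(y,t)$ with $y\in\R^{n-1}$. Let $p=(p_h,h_0+s)\in R$ with $s\ge0$, and let $w=(w_h,\ell)\in R$. The segment $[p,w]$ meets $\{x_n=h_0\}$ in a point whose horizontal part is $c_0=\mu p_h+(1-\mu)w_h\in R_{h_0}$, where $\mu=\frac{h_0-\ell}{h_0-\ell+s}\ge\frac1W$ (here integrality enters, via $h_0-\ell\ge1$). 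By in-slice asymmetry, $c_1:=q^*-\frac{1}{\gamma_0}(c_0-q^*)\in R_{h_0}$. Now a direct computation shows that the convex combination $\alpha(c_1,h_0)+(1-\alpha)w$ with $\alpha=\frac{\gamma_0}{\gamma_0+1-\mu}$ is \emph{exactly} the point $x^*-\epsilon(p-x^*)$ with $\epsilon=\frac{\mu}{\gamma_0+1-\mu}\ge\frac{1}{W(\gamma_0+1)}$ (the horizontal identity fixes $\alpha$, and the vertical one then holds automatically because $(1-\mu)(h_0-\ell)=\mu s$). The case $s\le0$ is symmetric, using a point at height $L$. Hence $-\frac{1}{W(\gamma_0+1)}(R-x^*)\subseteq R-x^*$, as required. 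With this lemma your recursion reads $\sigma(n,k)\le\max\{2n+1,\,W(n,k)(\sigma(n-1,kD)+1)\}$ with $D=\operatorname{lcm}(1,\dots,\lfloor W(n,k)\rfloor)$, which is finite but of tower type in $n$ --- vastly worse than Pikhurko's doubly exponential bound. That is irrelevant for Theorem~\ref{thr:pik} itself, but it would degrade the explicit estimate in Proposition~\ref{prop:ub-beta}.
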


%The following is probably known.

%\begin{Prop}
%Let $S,R\subseteq \R^n$ be finite sets, $x \in \R^n$. If $x \in \interior(P')$, then it can be expressed as the strictly convex combination of points of $S$ plus a conic combination of points of $R$. If conversely it can be expressed as the
%\end{Prop}
%\begin{proof}
%Let $S=\{s^1,\dots, s^{p}\}$, $R=\{r^1,\dots, r^{q}\}$. To show necessity, let $x \in \interior(P)$ be given by $\gamma_{1}s^1 + \dots + \gamma_{p}s^p + \mu_{1}r^1 + \dots + \mu_{q}r^q$, with $\gamma$ (resp. $\mu$) be the coefficient of a convex (resp. conic) combination. To show that all coefficients $\gamma$ are strictly positive it is enough to provide, for each $i=1,\dots,p$, a combination as above producing $\bar x$ and having $\gamma_i>0$. For $i=1,\dots, p$ either $\bar x\in s^i + \cone(R)$, or the line passing through $\bar x$ and $s^i$ intersects $P$ in some point $\bar z\neq s^i,\bar x$, and $\bar x$ is the proper convex combination of $s^i$ and $\bar z$. In both cases the thesis follows.
%\end{proof}

The main result of this section is the following.

\begin{thm}\label{lem:ellipsoid}
There exists a function $\nu: \N \rightarrow ]0,+\infty[$ such that every non-lattice-free integral polyhedron $P\subseteq\R^n$ contains a centrally symmetric convex body of volume $\nu(n)$, whose only integer point is its center.
\end{thm}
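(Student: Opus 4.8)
The plan is to use Pikhurko's theorem (Theorem~\ref{thr:pik}) to locate a well-centered interior lattice point, symmetrize $P$ about it, and then realize the desired body as a suitably scaled fundamental box of a reduced lattice basis.

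First I would reduce to the bounded case. Since $P$ is integral and non-lattice-free, it has a lattice point $p_0\in\interior(P)$ and $P=\conv(P\cap\Z^n)$; because $p_0$ is interior, finitely many points of $P\cap\Z^n$ already have $p_0$ in the interior of their convex hull, so there is a bounded, full-dimensional, non-lattice-free integral polytope $\tilde P\subseteq P$. As it suffices to produce the body inside $\tilde P$, I assume henceforth that $P$ is a polytope. Applying Theorem~\ref{thr:pik} and translating, I may assume $\mathbf 0\in\interior(P)\cap\Z^n$ with $\gamma(P,\mathbf 0)\le\sigma(n)$. Since $\gamma(P,\mathbf 0)\le\sigma(n)$ means $\frac{1}{\sigma(n)}P\subseteq -P$, the symmetrization $K:=P\cap(-P)$ is a centrally symmetric convex body with
\[\tfrac{1}{\sigma(n)}P\subseteq K\subseteq P,\qquad\text{equivalently}\qquad P\subseteq\sigma(n)\,K.\]

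The crucial point is to control the lattice relative to $K$. Writing $\|x\|_K=\min\{t\ge0:x\in tK\}$ for the gauge of $K$ and letting $\lambda_1(K)\le\dots\le\lambda_n(K)$ be the successive minima, I first bound the largest one. As $P$ is a full-dimensional integral polytope containing $\mathbf 0$, its lattice points span $\R^n$ linearly, so $P$ contains $n$ linearly independent integer vectors $v_1,\dots,v_n$; by the sandwich above each $v_i\in\sigma(n)K$, whence $\lambda_n(K)\le\sigma(n)$. I then invoke the classical fact from the geometry of numbers that a symmetric convex body admits a basis $b_1,\dots,b_n$ of $\Z^n$ with $\|b_i\|_K\le c(n)\,\lambda_i(K)$ for some $c(n)$ depending only on $n$; in particular $\|b_i\|_K\le c(n)\sigma(n)=:M(n)$ for every $i$.

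Finally I would take the fundamental box $L:=\{\sum_{i=1}^n t_ib_i:|t_i|\le\tfrac12\}$. It is centrally symmetric about $\mathbf 0$, and since $\sum t_ib_i\in\Z^n$ exactly when all $t_i\in\Z$, the constraint $|t_i|\le\tfrac12$ forces $L\cap\Z^n=\{\mathbf 0\}$; moreover $\vol(L)=|\det(b_1,\dots,b_n)|=1$ because the $b_i$ form a lattice basis. For $x=\sum t_ib_i\in L$ one has $\|x\|_K\le\sum_i|t_i|\,\|b_i\|_K\le\tfrac n2 M(n)$, so $L\subseteq\tfrac n2 M(n)\,K$. Hence, with $f(n):=\max\!\left(1,\tfrac n2 M(n)\right)$, the body $L':=\tfrac1{f(n)}L$ satisfies $L'\subseteq K\subseteq P$, is centrally symmetric, still has $\mathbf 0$ as its only integer point (being a subset of $L$), and has $\vol(L')=f(n)^{-n}=:\nu(n)>0$, a quantity depending only on $n$. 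Translating back so that the center is the lattice point chosen by Theorem~\ref{thr:pik} completes the construction; if an ellipsoidal shape is preferred, one may inscribe the maximal-volume ellipsoid in $L'$, which remains lattice-point-free apart from its center and has volume bounded below by a function of $n$.

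I expect the main obstacle to be the passage from ``$n$ short linearly independent integer vectors'' to an actual \emph{basis} of $\Z^n$ of comparably small $K$-norm: the naive fundamental box of the sublattice $\langle v_1,\dots,v_n\rangle$ is lattice-point-free only for that sublattice and may contain up to $[\Z^n:\langle v_1,\dots,v_n\rangle]$ points of $\Z^n$, so the reduced-basis input is genuinely required. A secondary subtlety, already visible for a thin two-dimensional strip, is that one cannot obtain a large body with a single interior lattice point by uniformly dilating $K$, since $\lambda_1(K)$ may be arbitrarily small; it is precisely the \emph{non-uniform} box aligned with the reduced basis that converts the bound $\lambda_n(K)\le\sigma(n)$ into a lower bound on the volume.
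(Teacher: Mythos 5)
Your proof is correct, but its core construction is genuinely different from the paper's. The two arguments share the outer scaffolding: reduce to a polytope by selecting finitely many lattice points whose convex hull still has an interior lattice point (the paper does this via Steinitz' theorem), apply Pikhurko's theorem to get $\gamma(P,\mathbf0)\le\sigma(n)$, and use the asymmetry coefficient to fit a symmetric body into $P$. After that they diverge. The paper symmetrizes \emph{outward}: it sets $Q=\conv(P\cup-P)$ and runs a self-contained iteration which, as long as some nonzero $y\in\interior(Q)\cap\Z^n$ exists, replaces the current polytope by $\conv(\{\pm y,\pm v^1,\dots,\pm v^{n-1}\})$ for suitable vertices $v^i$; this terminates in a centrally symmetric \emph{integral} polytope $\bar Q$ with no nonzero interior lattice point, whose volume is at least $2^n/n!$ because it decomposes into $2^n$ integral simplices, and scaling by $1/\gamma$ places it inside $P$. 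You symmetrize \emph{inward} ($K=P\cap(-P)$), bound $\lambda_n(K)\le\sigma(n)$ using $n$ linearly independent integer points of $P$, and invoke the classical geometry-of-numbers theorem (Cassels) that a lattice admits a basis with $\|b_i\|_K\le c(n)\,\lambda_i(K)$; the scaled fundamental box of that basis is the desired body. What each buys: the paper's argument is elementary and self-contained and yields a somewhat larger volume constant ($2^n/(\sigma(n)^n n!)$ versus your roughly $(4/(n^2\sigma(n)))^n$); yours outsources the combinatorial work to a standard lemma --- whose necessity you correctly diagnose, since $n$ short independent lattice vectors alone generate only a finite-index sublattice --- and in exchange the lattice-point condition comes out cleaner: a point of your box with coordinates $|t_i|\le\tfrac12$ in the basis $b_1,\dots,b_n$ is integral only if all $t_i=0$, so even the \emph{boundary} is free of nonzero lattice points. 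The paper's final step ($S=\frac{1}{\gamma}\bar Q\subseteq\bar Q$) is slightly loose on exactly this point when $\gamma=1$, since $\bar Q$ has integer vertices on its boundary; your construction avoids that issue entirely. Both constants are of order $\sigma(n)^{-n}2^{-O(n\log n)}$, so either version supports the doubly exponential estimate for $\beta(n)$ in Section~5.
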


\begin{proof}
First of all we show that we can assume wlog that $P$ is bounded. This can be proven by using Steinitz' theorem (see, e.g., \cite[Theorem~1.3.10]{schn}), which is as follows:
given a set $T\subseteq\R^n$ and a point $z\in\interior(\conv(T))$, there exists a subset $T'\subseteq T$ with $|T'|\le 2n$ such that $z\in\interior(\conv(T'))$. Now, let $P$ be a non-lattice-free integral polyhedron and take $z\in\interior(P)\cap\Z^n$. Apply Steinitz' theorem with $T=P\cap\Z^n$ and note that $\conv(T')$ is a non-lattice-free integral polytope contained in $P$.
Therefore, replacing $P$ with $\conv(T')$, we may assume that $P$ is a polytope.

By Theorem~\ref{thr:pik}, there exists $x\in\interior(P)\cap\Z^n$ such that $\gamma(P,x)\le \sigma(n)$. To simplify notation, we write $\gamma$ instead of $\gamma(P,x)$ and assume wlog $x=\mathbf0$. Define $Q=\conv(P\cup-P)$. Note that $Q$ is a centrally symmetric full-dimensional integral polytope containing the origin in its interior.

We claim that there exists a centrally symmetric full-dimensional integral polytope $\bar Q\subseteq Q$ such that the origin is the unique integer point in $\interior(\bar Q)$. If the origin is the unique integer point in $\interior(Q)$, then we can take $\bar Q=Q$. Therefore we assume that there exists $y\in\interior(Q)\cap\Z^n$ with $y\ne\mathbf0$. Let $\{y,v^1,\dots,v^{n-1}\}$ be a basis of $\R^n$, where $v^1,\dots,v^{n-1}$ are vertices of $Q$ (the existence of such a basis follows from the full-dimensionality of $Q$). Then the polytope obtained as the convex hull of the points $\pm y,\pm v^1,\dots,\pm v^{n-1}$ is a centrally symmetric full-dimensional integral polytope contained in $Q$. Since $y$ does not lie in the interior of this polytope, we can iterate this procedure finitely many times until we obtain a centrally symmetric full-dimensional integral polytope $\bar Q\subseteq Q$ such that the origin is the unique integer point in $\interior(\bar Q)$.

If we define $S=\frac1\gamma \bar Q$, we have $S\subseteq\conv\left(\frac1\gamma P\cup-\frac1\gamma P\right)$. By the definition of the coefficient of asymmetry, $\pm\frac1\gamma P\subseteq P$, thus $S\subseteq P$. Furthermore,
\[\vol(S)=\frac1{\gamma^n}\vol(\bar Q)\ge\frac{2^n}{\gamma^n n!},\]
where the last inequality holds because $\bar Q$ is the union of $2^n$ full-dimensional integral simplices with disjoint interiors. Since $\gamma \le \sigma(n)$, we obtain the bound $\vol(S)\ge 2^n / (\sigma(n)^n n!)$, which depends only on $n$. Finally, the origin is the unique integer point in $S$, as $S\subseteq \bar Q$.
\end{proof}

Wlog we assume that the values $\nu(n)$ from Theorem~\ref{lem:ellipsoid} are non-increasing in $n$.

We remark that Theorem~\ref{lem:ellipsoid} does not hold if, instead of looking for {\em any} centrally symmetric convex body, we ask for the existence of a {\em specific} full-dimensional centrally symmetric convex body $S$ and a number $t>0$, both depending on $n$ only, such that $tS$ is contained in $P$ (up to a translation by an integer vector) and has its center as its unique integer point. This is shown by the following simple example. Let $(P_k)_{k \in \N}$ be the sequence of parallelograms in $\R^2$ given by $P_k:=\conv(\{\pm(k,1),\pm(1,0)\})$. The only integer point in $\interior(P_k)$ is the origin. One readily verifies that the distance between the origin and the boundary of the parallelogram cannot be lower-bounded by a constant. Then, if $S$ and $t$ are fixed as above, for $k$ large enough $tS$ is not contained in $P_k$.

%We remark that Theorem~\ref{lem:ellipsoid} does not hold if ``ellipsoid'' is replaced by ``ball'', as shown by the following simple example. Consider the family of polytopes $\{P_k\}_{k \in \N}\subseteq \R^2$, where $P_k$ is the triangle with vertices $(0,0)$, $(2,2k-1)$ and $(2,2k+1)$. The only integer point in $\interior(P_k)$ is $(1,k)$. We claim that the radius (and thus the volume) of a ball centered at $(1,k)$ and contained in $P_k$ cannot be lower bounded by a constant. To see this, observe that the volume of the triangle $P_k$ is 2 and the length of one of its sides is at least $2k$; this implies that the distance between any point in the triangle and this side is at most $2/k$.

\section{Proof of Theorem \ref{th:main}}\label{sec:thr1}

We show Theorem \ref{th:main} by induction on the codimension $n-d$ of $P$, the case $d=n$ being trivial. Hence we fix integers $d, n$ with $0\le d < n$.

Let $P\subseteq \R^n$ be a $d$--dimensional integral polyhedron that is not relatively-lattice-free. Up to unimodular transformations, we may assume that $P \subseteq \R^d \times \{0\}^{n-d}$ and $\mathbf0 \in \relint(P)$, since $P$ is a rational polyhedron with at least one integer point in its relative interior. Assuming for the moment $d>0$, there exists a $d$--dimensional centrally symmetric compact convex set $S$ contained in $P$, centered at some integer point of $P$, whose only integer point is its center and whose volume is $\nu(d)$ (see Theorem \ref{lem:ellipsoid}). Let $\pi : \R^n \rightarrow \R^{n-d}$ be the orthogonal projection onto the space of the last $n-d$ components. Note that $\pi(P)=\{\mathbf0\}$. Also, let $k=\lceil2^{n-1}n/\nu(n)\rceil$. Note that $k$ depends only on $n$.

\begin{claim-no-number}
$\pi(Q) \cap k\Z^{n-d} = \{\mathbf0\}$ for every relaxation $Q$ of $P$.
\end{claim-no-number}

\begin{cpf}
After translating $P$ by an integer vector, we may assume that $S$ is centered at $\mathbf0$. Assume by contradiction that the claim is false, i.e., there exist a relaxation $Q$ of $P$ and a point $\bar x \in \pi(Q) \cap k\Z^{n-d} \sm \{\mathbf0\}$ (see Figure \ref{fig:non-full-dim}).
Let $\hat x \in Q$ be such that $\pi(\hat x) = \bar x$. We can assume wlog that the integer vector $\bar x/k$ is a primitive vector, i.e, there are no integer points in the open segment $]\mathbf0,\bar x/k[$. Then, by applying a suitable linear unimodular transformation in the
space $\R^n$ which keeps the first $d$ components (and thus also $P$)
unchanged, we may assume that $\bar x/k=e^1\in\R^{n-d}$.

We define the $(d+1)$--dimensional compact convex set $C := \conv(S\cup\{\hat x\}) \subseteq \R^n$.
Note that $C \subseteq Q$, as both $S$ and $\hat x$ are contained in $Q$. Let $\bar C$ be the $(d+1)$--dimensional centrally symmetric compact convex set defined as $\bar C=C\cup-C$. Note that $\bar C$ lies in the space of the first $d+1$ components. The volume of $\bar C$ can be bounded as follows:
\[\vol(\bar C)\ge\frac{2k\nu(d)}{d+1}\ge\frac{2k\nu(n)}{n}\ge2^n,\]
where the last inequality follows from the choice of $k$.
By Minkowski's Convex Body Theorem (Theorem~\ref{thm:MCB}), $\bar C$ contains two non-zero integer points $z^1$ and $z^2$
with $z^1=-z^2$.
Since $S \cap \Z^n = \{\mathbf0\}$ and $\bar C \cap \affhull (P) = S$, $z^1$ and $z^2$ do not lie in $\affhull(P)$ and so they are not contained in $P$. By symmetry of $\bar C$, we can assume that $z^1$ lies in $C$.
As $C \subseteq Q$, $z^1$ is an integer point contained in $Q$ and not in $P$, contradicting the fact that $Q$ is a relaxation of $P$.
\end{cpf}

\begin{figure}
\centering
% Generated with LaTeXDraw 2.0.8
% Fri May 18 16:03:00 CEST 2012
% \usepackage[usenames,dvipsnames]{pstricks}
% \usepackage{epsfig}
% \usepackage{pst-grad} % For gradients
% \usepackage{pst-plot} % For axes
\scalebox{.8} % Change this value to rescale the drawing.
{
\begin{pspicture}(0,-3.8)(10.42,3.8)
\definecolor{color81b}{rgb}{0.8,0.8,0.8}
\definecolor{color82b}{rgb}{0.6,0.6,0.6}
\pspolygon[linewidth=0.04,fillstyle=solid,fillcolor=color81b](3.54,3.02)(3.54,-1.5)(1.16,-3.78)(1.16,0.64)
\psbezier[linewidth=0.04,fillstyle=solid,fillcolor=color82b](3.02,0.10832092)(3.02,-0.70335793)(1.74,-1.86)(1.74,-1.0483211)(1.74,-0.23664233)(3.02,0.9199998)(3.02,0.10832092)
\psline[linewidth=0.04cm,linestyle=dotted,dotsep=0.16cm,arrowsize=0.15cm 2.0,arrowlength=1.4,arrowinset=0.4]{->}(2.32,-0.6)(10.4,-0.56)
\psline[linewidth=0.04cm,linestyle=dotted,dotsep=0.16cm,arrowsize=0.15cm 2.0,arrowlength=1.4,arrowinset=0.4]{->}(2.34,-0.62)(2.34,3.78)
\psline[linewidth=0.04cm,linestyle=dotted,dotsep=0.16cm,arrowsize=0.15cm 2.0,arrowlength=1.4,arrowinset=0.4]{->}(2.36,-0.58)(0.0,-2.76)
\psdots[dotsize=0.24](2.36,-0.6)
\usefont{T1}{ptm}{m}{n}
\rput(2.5714064,-0.55){$0$}
\usefont{T1}{ptm}{m}{n}
\rput(2.0873437,-1){\large $S$}
\usefont{T1}{ptm}{m}{n}
\rput(1.5573437,-3.045){\large $P$}
\usefont{T1}{ptm}{m}{n}
\rput(7.8,-0.95){$\bar x$}
\usefont{T1}{ptm}{m}{n}
\rput(6.4414062,0.99){$\hat x$}
\psdots[dotsize=0.24](7.56,-0.58)
\psdots[dotsize=0.24](6.32,0.64)
\psline[linewidth=0.04cm](2.9,0.38)(6.26,0.64)
\psline[linewidth=0.04cm](2.14,-1.26)(6.28,0.62)
\psline[linewidth=0.04cm](1.96,-0.38)(6.28,0.64)
\usefont{T1}{ptm}{m}{n}
\rput(4.787344,0.875){\large $C$}
\psline[linewidth=0.04cm, linestyle=dashed,dash=0.08cm 0.08cm](2.92,-0.32)(6.32,0.66)
\psline[linewidth=0.04cm,linestyle=dotted,dotsep=0.16cm](7.56,-0.58)(6.4,-1.74)
\psline[linewidth=0.04cm,linestyle=dotted,dotsep=0.16cm](1.18,-1.76)(6.36,-1.78)
\psline[linewidth=0.04cm,linestyle=dotted,dotsep=0.16cm](6.36,-1.68)(6.34,0.7)
\psline[linewidth=0.04cm,linestyle=dashed,dash=0.16cm 0.16cm,arrowsize=0.15cm 2.0,arrowlength=1.4,arrowinset=0.4]{<-}(7.42,-0.48)(6.48,0.48)
\end{pspicture}
}
\caption{Illustration of the proof of the claim for the case $d=2$ and $n=3$.}\label{fig:non-full-dim}\end{figure}
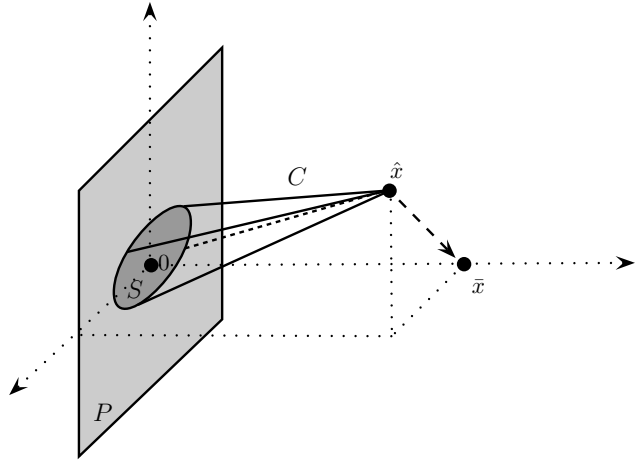

Note that though the above arguments cannot be used when $d=0$, the claim also holds for $d=0$, as in this case $P=\{\mathbf0\}$.

By Corollary~\ref{cor:flat-poly} and the above claim, for each relaxation $Q$ of $P$ there exists a number $\psi(n)$ depending only on $n$ such that $w(\pi(Q)) \le \psi(n)$. Now fix a relaxation $Q$ and let $c \in \Z^{n-d} \sm \{\mathbf0\}$ be such that
$$w(\pi(Q)) = \max_{x \in \pi(Q)} cx - \min_{x \in \pi(Q)} cx.$$
Define $c' = (0,\dots,0,c) \in \Z^n$.
Note that
$$\max_{x \in Q} c'x = \max_{x \in \pi(Q)} cx \qquad \text{and} \qquad \min_{x \in Q} c'x = \min_{x \in \pi(Q)} cx,$$
and $c'$ is orthogonal to $\affhull (P)$, thus $c'x=0$ for every $x \in P$.
It follows by Lemma \ref{lem:upper} that the equation $c'x=0$ is valid for $Q^{(p')}$, where $p' = \psi(n) \theta(n)+1$. Therefore, the value $p'$ depends on $n$ only.
By construction, $c'$ is a primitive vector. Hence, up to linear unimodular transformations, we may assume that $c' = e^n$. Thus $P$ can be written as $P = \bar P \times \{0\}$ for an integral polyhedron $\bar P \subseteq \R^{n-1}$.
Similarly, $Q^{(p')}$ can be written as $Q^{(p')} = \bar Q \times \{0\}$, for a rational polyhedron $\bar Q \subseteq \R^{n-1}$.
%Let $\bar P$ and $\bar Q$ be the orthogonal projection of $P$ and $Q^{(p')}$ respectively in the space $\R^{n-1}$ of the first $n-1$ variables.
%Note that $P = \bar P \times \{0\}$, and $Q^{(p')} = \bar Q \times \{0\}$.
Since the codimension $n-1-d$ of $\bar P \subseteq \R^{n-1}$ is smaller than the codimension $n-d$ of $P \subseteq \R^n$, it follows by induction that there exists $p \in \N$, which again only depends on $n$, such that $Q^{(p+p')} \subseteq \affhull(P)$. This concludes the proof of Theorem~\ref{th:main}.

\section{Proof of Theorem \ref{th:main2}}\label{sec:thr2}

A lemma of Chv\'atal, Cook, and Hartmann \cite[Lemma~2.1]{ChCoHa} gives sufficient conditions for a sequence of points to be in successive Chv\'atal closures of a rational polyhedron. The one we provide next is a less general, albeit sufficient for our needs, version of their original lemma.

\begin{lemma}\label{lem:iterat-cc}
Let $Q\subseteq \R^n$ be a rational polyhedron, $x \in Q$, $v \in \R^n$ and $p \in \N$. For $j \in \{1,\dots, p\}$, define $x^j:=x-j\cdot v$. Assume that, for all $j \in \{1,\dots,p\}$ and every inequality $cx \leq \delta$ valid for $Q_I$ with $c \in \Z^n$ and $cv < 1$, one has $cx^j\leq \delta$. Then $x^j \in Q^{(j)}$ for all $j \in \{1,\dots, p\}.$
\end{lemma}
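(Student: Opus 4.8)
The plan is to prove, by induction on $j$, that $x^j\in Q^{(j)}$ for every $j\in\{0,1,\dots,p\}$; the base case $j=0$ is exactly the hypothesis $x=x^0\in Q=Q^{(0)}$. The engine of the induction is the standard description of the CG closure of a rational polyhedron $R$, namely
\[
R'=\bigcap_{c\in\Z^n}\Bigl\{x\in\R^n:\ cx\le\bigl\lfloor\max_{z\in R}cz\bigr\rfloor\Bigr\},
\]
where a direction $c$ with unbounded maximum imposes no constraint. Since the primitive integer normals of the facets of $R$ occur among these $c$, the right-hand side is automatically contained in $R$. Consequently, to prove $x^j\in Q^{(j)}=(Q^{(j-1)})'$ it suffices to check that $cx^j\le\lfloor\delta\rfloor$ for every $c\in\Z^n$, where $\delta:=\max_{z\in Q^{(j-1)}}cz$ (assumed finite, as otherwise there is nothing to show).

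So fix $j\in\{1,\dots,p\}$, assume inductively that $x^{j-1}\in Q^{(j-1)}$, fix $c\in\Z^n$, and split on the value of $cv$. If $cv\ge 1$, then the induction hypothesis gives $cx^{j-1}\le\delta$, whence
\[
cx^j=cx^{j-1}-cv\le\delta-1<\lfloor\delta\rfloor,
\]
using $\lfloor\delta\rfloor>\delta-1$. If instead $cv<1$, I invoke the hypothesis of the lemma. The key observation is that $cx\le\lfloor\delta\rfloor$ is valid for $Q_I$: every CG inequality is satisfied by the integer points of $Q$, so $Q^{(j-1)}\cap\Z^n=Q\cap\Z^n$ and hence $(Q^{(j-1)})_I=Q_I$; since $\max_{z\in Q_I}cz=\max_{z\in Q_I\cap\Z^n}cz$ is an integer not exceeding $\delta$, it is at most $\lfloor\delta\rfloor$. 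Thus $cx\le\lfloor\delta\rfloor$ is a valid inequality for $Q_I$ with $c\in\Z^n$ and $cv<1$, so the hypothesis (for this very index $j$) yields $cx^j\le\lfloor\delta\rfloor$. In both cases $cx^j\le\lfloor\delta\rfloor$, and since $c$ was arbitrary, $x^j\in Q^{(j)}$, completing the induction.

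The step I expect to be the crux is the $cv<1$ case, and specifically the decision to feed the \emph{floored} inequality $cx\le\lfloor\delta\rfloor$ into the hypothesis rather than $cx\le\delta$ itself. Applying the hypothesis to $cx\le\delta$ would only give $cx^j\le\delta$, which is insufficient to establish the CG cut $cx^j\le\lfloor\delta\rfloor$. The resolution is that $cx\le\lfloor\delta\rfloor$ is \emph{already} valid for $Q_I$ -- this is where the invariance of the integer hull under the closure operator is used -- so it is a legitimate instance of the hypothesis and delivers precisely the bound needed. The role of the case split is complementary: for directions with $cv\ge 1$ the hypothesis is silent, but there the crude one-unit decrease from $x^{j-1}$ to $x^j$ already forces $cx^j$ strictly below $\lfloor\delta\rfloor$.
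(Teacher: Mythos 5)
Your proof is correct. A point of comparison worth making explicit: the paper does not actually prove this lemma---it presents it as a less general version of \cite[Lemma~2.1]{ChCoHa} and relies on that reference---so what you have produced is a self-contained reconstruction of the standard Chv\'atal--Cook--Hartmann induction. Your argument matches theirs in its essentials: the split on $cv\ge 1$ versus $cv<1$, and the key step of feeding the \emph{floored} cut $cx\le\lfloor\delta\rfloor$ into the hypothesis, justified by the fact that integer points (hence the integer hull) survive the closure operator. One genuine streamlining on your side: by using the representation of the closure of a rational polyhedron $R$ as $\bigcap_{c\in\Z^n}\{x:cx\le\lfloor\max_{z\in R}cz\rfloor\}$ (unbounded directions omitted), which holds because the facet and implicit-equality normals of $R$ can be taken integral so that their CG strengthenings already force membership in $R$, you never need to verify $x^j\in Q^{(j-1)}$ separately; the classical argument instead carries a slightly stronger induction to guarantee that membership. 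Two small matters to tidy: first, your justification that $\max_{z\in Q_I}cz$ is an integer not exceeding $\delta$ tacitly assumes $Q_I\ne\varnothing$; when $Q_I=\varnothing$ the validity of $cx\le\lfloor\delta\rfloor$ for $Q_I$ is vacuous and the hypothesis still applies, but this case should be acknowledged. Second, writing $\delta=\max_{z\in Q^{(j-1)}}cz$ rather than a supremum is legitimate only because a linear functional bounded above on a polyhedron attains its maximum there---worth a word, since $Q^{(j-1)}$ being a rational polyhedron (Schrijver's theorem, quoted in the paper's introduction) is what licenses both this and the intersection-of-cuts representation.
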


We now prove Theorem \ref{th:main2}. Let $P\subseteq\R^n$ be a relatively lattice-free integral polyhedron $P$ of dimension $d<n$. Up to unimodular transformations, we can assume that $P \subseteq \R^{n-1} \times \{0\}$. Let $\bar x$ be a point in the relative interior of $P$. Fix $k \in \N$ and define $Q$ as the (topological) closure of $\conv(P\cup\{\bar x+(k+1)e^n\})$. Since $Q$ is a rational polyhedron (see \cite{balas}), $Q$ is a relaxation of $P$. We now argue that $Q^{(k)}$ is not contained in $\aff(P)$. We apply Lemma \ref{lem:iterat-cc} with $x=\bar x + (k+1) e^n$, $v=e^n$ and $p=k+1$. Let $cx \le \delta$ be an inequality
with $c \in \Z^n$. If $cx\le\delta$ is valid for $Q_I=P$ and satisfies $cv=c_n<1$, then $c_n\leq 0$, since both $c$ and $v$ are integer vectors. Then, for $1\le j\le k+1$, \[cx^j=\sum_{i=1}^{n-1} c_i x^j_i + (k+1-j)c_n = c\bar x + (k+1-j)c_n \leq \delta.\]
Lemma~\ref{lem:iterat-cc} gives in particular $x^k \in Q^{(k)}$. Since $x^k\notin\aff(P)$, this concludes the proof of Theorem~\ref{th:main2}.

\section{Asymptotic behavior of $\beta(n)$}
\label{sec:bounds}

In this section we prove that if in Theorem~\ref{th:main} one chooses the minimum $\beta(n)$ for $n\in\N$, then $\beta(n)=2^{2^{\Theta(n)}}$.

\begin{Prop}\label{prop:ub-beta}
For the function $\beta$ from Theorem~\ref{th:main}, one can set $\beta(n)=2^{4^{n+o(n)}}$.
\end{Prop}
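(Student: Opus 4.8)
The plan is to carefully track the bounds that accumulate through the inductive proof of Theorem~\ref{th:main} and show that the resulting recursion for $\beta$ telescopes to a doubly-exponential expression. The proof of Theorem~\ref{th:main} reduces the codimension of $P$ by one at the cost of $p' = \psi(n)\theta(n)+1$ additional CG closures, where $\psi(n)$ is the flatness-type bound from Corollary~\ref{cor:flat-poly} and $\theta(n)=\varphi(n-1)+1$ comes from Lemma~\ref{lem:upper}. So the first step is to write $\beta(n)$ as a sum over the $n$ induction steps of terms of the form $\psi(n)\theta(n)+1$, and then substitute explicit asymptotic estimates for each ingredient function. From the remark after Corollary~\ref{cor:flat-poly} we have $\omega(n,k)=O(kn^{3/2})$ with $k=\lceil 2^{n-1}n/\nu(n)\rceil$, so $\psi(n)$ inherits whatever growth $\nu(n)$ forces. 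The crucial observation is that the dominant source of growth is the coefficient of asymmetry $\sigma(n)$ entering $\nu(n)$ via Theorem~\ref{lem:ellipsoid}.

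The key step is therefore to import Pikhurko's quantitative bound on $\sigma(n)$ and propagate it. From the proof of Theorem~\ref{lem:ellipsoid} we obtained $\nu(n)\ge 2^n/(\sigma(n)^n\, n!)$; since Pikhurko's bound on the coefficient of asymmetry is itself exponential in $n$ (roughly $\sigma(n)=2^{O(n)}$), the term $\sigma(n)^n$ already contributes a factor of the form $2^{O(n^2)}$, and combined with $n!$ this makes $1/\nu(n)$ grow like $2^{O(n^2)}$ or faster. Consequently $k=\lceil 2^{n-1}n/\nu(n)\rceil$ is of order $2^{O(n^2)}$, which feeds into $\psi(n)=\omega(n,k)=O(k\,n^{3/2})$, again of order $2^{O(n^2)}$. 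First I would make each of these estimates precise enough to pin down the exponent, being careful to state whether the $\Theta$ in the final answer is genuinely governed by $\sigma(n)^n$ or by some other term.

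The main obstacle, and the step that converts the single-exponential ingredients into a doubly-exponential $\beta$, is the recursion itself: the factor $\varphi(n-1)$ inside $\theta(n)$ is the flatness-based bound from Lemma~\ref{lem:PI=0}, and $\varphi$ is known to grow doubly-exponentially (this is the Chv\'atal--Cook--Hartmann-type bound on the CG rank of lattice-free polyhedra). So even though each individual reduction step adds only a single-exponential quantity $\psi(n)\theta(n)+1$, the presence of $\varphi(n-1)$ inside $\theta$ means that $\beta(n)$ is at least as large as $\varphi(n)$, forcing the doubly-exponential behavior $2^{2^{\Theta(n)}}$. The plan is to combine the explicit upper bound on $\varphi(n)$ (of the form $2^{4^{n+o(n)}}$, coming from the best known bounds on the CG rank of empty lattice polytopes) with the single-exponential contributions of $\psi$ and $k$, verify that the $\varphi$-term dominates, and conclude $\beta(n)=O(n)\cdot\max_{m\le n}\psi(m)\theta(m)=2^{4^{n+o(n)}}$. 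The hard part will be justifying that summing $n$ terms and multiplying by the single-exponential factors does not inflate the exponent beyond $4^{n+o(n)}$, which follows because $2^{O(n^2)}\cdot 2^{4^{n+o(n)}}=2^{4^{n+o(n)}}$ absorbs the lower-order single-exponential factors into the $o(n)$ in the double exponent.
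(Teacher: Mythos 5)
Your skeleton (writing $\beta(n)$ as $n$ induction steps each costing $\psi(n)\theta(n)+1$, then substituting explicit bounds for $\psi$, $\theta$, $k$, $\nu$, $\sigma$, $\varphi$) is exactly the paper's, but your two key quantitative inputs are swapped, and this is a genuine error, not a presentational one. First, Pikhurko's bound on the coefficient of asymmetry is \emph{not} $\sigma(n)=2^{O(n)}$: the bound from \cite[Theorem~4]{pik} used in the paper is $\sigma(n)=8n\cdot 15^{2^{2n+1}}$, which is \emph{doubly} exponential in $n$. It is precisely the term $\sigma(n)^n=(8n)^n 15^{n\cdot 2^{2n+1}}=2^{4^{n+o(n)}}$, entering $1/\nu(n)$, hence $k=\lceil 2^{n-1}n/\nu(n)\rceil$, hence $\psi(n)=O(kn^{3/2})$, that is the sole source of the double exponential. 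Second, the function $\varphi$ from Lemma~\ref{lem:PI=0} is \emph{not} ``known to grow doubly-exponentially'': by \cite[Remark~(2)]{CCT} one can take $\varphi(n)=n^{3n}=2^{O(n\log n)}$, so $\theta(n)=\varphi(n-1)+1$ is a negligible factor, and there is no ``best known bound of the form $2^{4^{n+o(n)}}$ on the CG rank of empty lattice polytopes'' playing any role here. The paper makes this attribution explicit right after the proof: the doubly exponential behavior of $\beta$ is inherited from $\sigma$, and \emph{any} bounds on $\omega$ and $\varphi$ of order $2^{2^{o(n)}}$ would do.

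The mis-attribution is not harmless, because with your stated estimates the proof would not give the claimed bound. If indeed $\sigma(n)=2^{O(n)}$ (so $\psi(n)=2^{O(n^2)}$) and one uses the true bound $\varphi(n)=n^{3n}$, your recursion yields $\beta(n)=2^{O(n^2)}$; this is incompatible with the lower bound $\beta(n)\ge 2^{2^{n-2}}-2$ proved in the second proposition of Section~\ref{sec:bounds}, so at least one of your two estimates must be false -- and in fact both are. Your final answer $2^{4^{n+o(n)}}$ only comes out right because the fictitious doubly exponential bound you assigned to $\varphi$ happens to coincide with the contribution that $\sigma(n)^n$ actually makes. To repair the argument, keep your outline but correct the inputs: take $\sigma(n)=8n\cdot 15^{2^{2n+1}}$, deduce $\nu(n)\ge 2^n/(\sigma(n)^n n!)$ from the proof of Theorem~\ref{lem:ellipsoid}, hence $k=O\bigl(n\,\sigma(n)^n n!\bigr)$ and $\psi(n)=O(kn^{3/2})$, and combine with $\theta(n)=O(n^{3n})$ to get $\beta(n)=n(\psi(n)\theta(n)+1)=O\bigl(15^{n\cdot 2^{2n+1}}8^n n^{4n+7/2} n!\bigr)=2^{4^{n+o(n)}}$, where the dominance analysis now correctly identifies $15^{n\cdot 2^{2n+1}}$, not $\theta(n)$, as the term that fixes the exponent.
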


\begin{pf}
The proof of Theorem~\ref{th:main} shows that one can set $\beta(n)= n(\psi(n)\theta(n)+1)$. Here $\theta$ is the function from Lemma~\ref{lem:upper}, while $\psi(n)=\omega(k,n)$, with $k=\ceil{2^{n-1}n/\nu(n)}$, where $\omega$ and $\nu$ are the functions from Corollary~\ref{cor:flat-poly} and Theorem~\ref{lem:ellipsoid}, respectively.

Let us first give an upper bound on $\theta(n)$. The proof of Lemma~\ref{lem:upper} shows that we can set $\theta(n)=\varphi(n-1)+1$, where $\varphi$ is the function from Lemma~\ref{lem:PI=0}. Since one can choose $\varphi(n)= n^{3n}$ (see \cite[Remark~(2)]{CCT}), we can set $\theta(n)=(n-1)^{3(n-1)}+1=O\left(n^{3n}\right)$.

We now consider the function $\psi(n)=\omega(k,n)$. As pointed out in Section~\ref{sec:tools}, one can set $\omega(k,n)=O\left(kn^{3/2}\right)$. We need an upper bound on $k=\ceil{2^{n-1}n/\nu(n)}$, i.e., a lower bound on $\nu(n)$. The proof of Theorem~\ref{lem:ellipsoid} shows that one can choose $\nu(n)=\frac{2^n}{\sigma(n)^n n!}$, where $\sigma$ is the function from Theorem~\ref{thr:pik}. As shown in \cite[Theorem~4]{pik}, one can set $\sigma(n)=8n\cdot 15^{2^{2n+1}}$. Therefore we can choose $\psi(n)=O\left(15^{n\cdot2^{2n+1}}8^nn^{n+5/2}n!\right)$.

Overall, we can set $\beta(n)= n(\psi(n)\theta(n)+1)=O\left(15^{n\cdot2^{2n+1}}8^nn^{4n+7/2}n!\right)=2^{4^{n+o(n)}}$.
\end{pf}

We now prove that any lower bound on $\beta(n)$ is doubly exponential in $n$.

\begin{Prop}
Every function $\beta$ from Theorem~\ref{th:main} satisfies $\beta(n) \ge 2^{2^{n-2}}-2$ for all $n\in\N$.
\end{Prop}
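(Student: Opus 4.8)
The goal is to exhibit, for every $n$, an integral polyhedron $P\subseteq\R^n$ satisfying the hypotheses of Theorem~\ref{th:main} (non-full-dimensional, with an integer point in its relative interior) together with a relaxation $Q$ whose CG rank relative to $\aff(P)$ is at least $2^{2^{n-2}}-2$. My plan is to take the cleanest possible instance: let $P$ be a segment lying in a coordinate line, say $P=\conv\{\mathbf0,Ne^1\}\times\{0\}^{n-1}$ for a suitable large integer $N$, so that $\mathbf0$ or $\lceil N/2\rceil e^1$ sits in its relative interior and $\aff(P)$ is the $x_1$-axis. The point of making $P$ one-dimensional is that ``reaching $\aff(P)$'' means driving $Q$ down to a set where all coordinates $x_2,\dots,x_n$ vanish, and I can try to certify that this cannot happen quickly by tracking a single cleverly chosen point (or a nested family of points) that survives many rounds of the CG closure.

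The core idea is to reuse the $2$-dimensional tent construction from Figure~\ref{fig:2dim} and \emph{amplify} it across dimensions. Recall that for $Q_t=\conv\{(0,0),(0,1),(t,1/2)\}$ one has $r(Q_t)\ge t$, because the apex recedes only by a bounded amount per CG round (the height $1/2$ is shaved very slowly when the base is wide). The plan is to build, by induction on $n$, a relaxation in $\R^n$ whose ``width'' in the extra direction is roughly the \emph{square} of the width achievable in $\R^{n-1}$, so that after $n$ steps the attainable apex-distance is doubly exponential. Concretely, I would let $Q$ be the convex hull of $P$ together with an apex point whose displacement off $\aff(P)$ is geometrically tiny but whose ``horizontal'' extent is enormous, and I would show via Lemma~\ref{lem:iterat-cc} that a suitable apex point (or a point slightly inside the slab $0<x_n$) lies in $Q^{(k)}$ for all $k$ up to the claimed doubly-exponential bound. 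The use of Lemma~\ref{lem:iterat-cc} is exactly the right tool: to show $x^j\in Q^{(j)}$ it suffices to check that every integer inequality $cx\le\delta$ valid for $Q_I=P$ with $cv<1$ is satisfied along the whole sequence $x^j=x-jv$, and for a segment $P$ the valid integer inequalities are highly structured.

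The key quantitative step, and the main obstacle, is producing the doubly-exponential gap. The single step from dimension $n-1$ to $n$ should at best \emph{square} the achievable rank, since each added coordinate lets me stack one slow-shaving tent on top of another; iterating a squaring recursion $f(n)\approx f(n-1)^2$ from a constant base is precisely what yields $2^{2^{\Theta(n)}}$. The delicate part is verifying that the CG closure really only improves the bound on the critical coordinate by a controlled (constant or polynomial) factor per round, \emph{uniformly} over all the many integer normal vectors $c$ that could cut into the construction; this amounts to a careful width/flatness estimate showing that no single CG inequality can reduce the apex displacement too fast, so that $\Omega(f(n-1)^2)$ rounds are genuinely needed to flatten the top dimension. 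I expect the cleanest route is to set up the recursion so that the inductive relaxation in $\R^{n-1}$, scaled and lifted into the hyperplane $x_n=0$ and then capped by a very slowly-descending apex in direction $e^n$, forces $Q^{(k)}\not\subseteq\aff(P)$ for $k$ roughly the square of the $(n-1)$-dimensional bound; the arithmetic of choosing the apex coordinates (large horizontal extent, inverse-doubly-exponential vertical offset) must be tuned so that the base case and the squaring together give exactly $2^{2^{n-2}}-2$. Verifying the hypothesis of Lemma~\ref{lem:iterat-cc} against \emph{all} relevant $c$, rather than just the obvious ones, is where the real work lies, and I would isolate that as a self-contained claim before assembling the induction.
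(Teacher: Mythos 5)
Your plan has a fatal flaw at its very first design decision: taking $P$ to be a one-dimensional segment. For a segment $P$ with an integer point in its relative interior, \emph{no} relaxation $Q\subseteq\R^n$ can require doubly exponentially many closures to reach $\aff(P)$. Indeed, run the Claim from the paper's proof of Theorem~\ref{th:main} with $d=1$: a one-dimensional integral polytope with an integer point in its relative interior contains a centrally symmetric segment of \emph{constant} length whose only integer point is its center (no Pikhurko-type asymmetry constant $\sigma(d)$ enters when $d=1$), so the constant $k$ in that Claim is an absolute constant, and the Minkowski argument shows that the projection $\pi(Q)$ orthogonal to $\aff(P)$ meets $k\Z^{n-1}$ only in the origin. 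By Corollary~\ref{cor:flat-poly}, $w(\pi(Q))=O(n^{3/2})$, and iterating the codimension-reduction step of Theorem~\ref{th:main} (the inductive polyhedron $\bar P$ stays one-dimensional throughout) yields $Q^{(g(n))}\subseteq\aff(P)$ with $g(n)=2^{O(n\log n)}$, far below $2^{2^{n-2}}-2$. Equivalently: the tall, wide ``tent'' over a segment that you want to build cannot be a relaxation --- if it avoided all integer points outside $P$ it would violate this flatness bound --- so verifying the hypothesis of Lemma~\ref{lem:iterat-cc} for it is not merely ``the delicate part,'' it is impossible. The doubly exponential lower bound forces $\dim(P)$ to grow with $n$; the hard instances must exploit the fact that $\nu(d)$ (equivalently $\sigma(d)$ in Theorem~\ref{thr:pik}) degrades doubly exponentially in the dimension $d$ of $P$ itself.

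Your intuition that a squaring recursion $f(n)\approx f(n-1)^2$ underlies the bound is correct, but in the actual proof the squaring lives in the geometry of a lattice-free simplex, not in any recursion on CG closures. The paper uses the Sylvester sequence $t_1=2$, $t_n=(t_{n-1}-1)t_{n-1}+1$ (this is your squaring), for which $\frac1{t_1}+\cdots+\frac1{t_{n-1}}+\frac1{t_n-1}=1$; this identity guarantees that the simplex $T=\conv\{\mathbf0,t_1e^1,\dots,t_{n-1}e^{n-1},(t_n-1)e^n\}$ is lattice-free. Taking $P$ to be its $(n-1)$-dimensional facet $\conv\{\mathbf0,t_1e^1,\dots,t_{n-1}e^{n-1}\}$ (which has $(1,\dots,1,0)$ in its relative interior) and $Q$ a slight rational perturbation of $T$, lattice-freeness ensures $Q_I=P$, i.e., $Q$ is a genuine relaxation, while $Q$ has width greater than $t_n-2$ in direction $e^n$. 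Then a \emph{single} application of Lemma~\ref{lem:iterat-cc}, exactly as in the proof of Theorem~\ref{th:main2}, gives $Q^{(i)}\not\subseteq\aff(P)$ for $i=t_n-2$, and the Lagarias--Ziegler bound $t_n\ge2^{2^{n-2}}$ finishes. So the two ingredients your proposal is missing are precisely the ones carrying all the content: a mechanism (here, the Sylvester identity) certifying that the tall relaxation contains no integer points beyond $P$, and a base polytope $P$ whose dimension grows with $n$.
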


\begin{pf}
We make use of a construction from \cite[Remark 3.10]{avwawe}.
The so-called \emph{Sylvester sequence} $(t_n)_{n \in\N}$ is defined by
\[
	t_n:=
	\begin{cases}
		2 & \text{if} \ n=1, \\
		(t_{n-1} - 1) t_{n-1} + 1 & \text{otherwise.}
	\end{cases}
\]
In the following we assume $n\ge2$, as otherwise the statement is trivial.

One easily checks by induction that
\[
	\frac{1}{t_1} + \cdots + \frac{1}{t_{n-1}} + \frac{1}{t_n-1} =1.
\]
It follows that the simplex
\[
	T:=\conv (\{\mathbf0,t_1 e^1, \ldots, t_{n-1} e^{n-1}, (t_n-1) e^n \}) \subseteq \R^n
\]
is lattice-free (actually, even maximal lattice-free; see \cite{avwawe}). We modify the simplex $T$ using an appropriate perturbation of its vertex $(t_n-1) e^n$. E.g., we can consider the simplex
\[
	Q:= \conv (\{\mathbf0,t_1 e^1,\ldots,t_{n-1} e^{n-1}, p_\varepsilon \}),
\]
 where $p_\varepsilon$ is a rational point in $\interior(T)$, $\|p_\varepsilon - (t_n-1) e^n \| \le \varepsilon$ and $\varepsilon>0$. By construction, the integer hull of $Q$ is the facet
\[
	P:=\conv (\{\mathbf0,t_1 e^1,\ldots,t_{n-1} e^{n-1} \}) \subseteq \R^n
\]
of $T$. Clearly, $P$ is an integral polytope of dimension $n-1$ with the integer point $(1,\ldots,1,0) \in \R^n$ in its relative interior. For a small $\varepsilon$  (say,  $\varepsilon<1$) the width of $Q$ in direction $e^n$ is greater than $t_n-2$. Applying Lemma~\ref{lem:iterat-cc} in the same manner as in the proof of Theorem~\ref{th:main2}, we obtain $Q^{(i)} \ne P$ for $i=t_n-2$. Since, as shown in \cite[page 1026]{Lazi}, $t_n \ge 2^{2^{n-2}}$ for every $n\in\N$, we have $\beta(n) \ge t_n -2  \ge 2^{2^{n-2}}-2$.
\end{pf}

As one can see from our proofs, the doubly exponential behavior of the minimal $\beta(n)$ is inherited from the doubly exponential behavior of $\sigma(n)$ from Theorem~\ref{thr:pik}. On the other hand, the choice of an upper bound for $\omega(n)$ and $\varphi(n)$ is not that relevant: we arrive at the assertion of Proposition~\ref{prop:ub-beta} as long as $\omega(n)$ and $\varphi(n)$ are chosen to be of order $2^{2^{o(n)}}$ (that is,
very weak bounds for $\omega(n)$ and $\varphi(n)$ would suffice).

\section{$\NP$-completeness of $\AFFIHULL$}
\label{sec:complexity}

Let $\AFFIHULL$ denote the following decision problem: given $x \in \Q^n$ and a rational polyhedron $Q\subseteq\R^n$ (described by a system of linear inequalities with rational coefficients), decide whether $x \in \aff(Q_I)$.

\begin{Prop}\label{prop:NPcomplete}
	$\AFFIHULL$ is $\NP$-complete.
\end{Prop}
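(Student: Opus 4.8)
The plan is to show that $\AFFIHULL$ is in $\NP$ and is $\NP$-hard, with the hardness being the substantive part. For membership in $\NP$, I would observe that $x \in \aff(Q_I)$ holds if and only if there exist integer points $z^0, z^1, \dots, z^m \in Q \cap \Z^n$ with $m \le n$ such that $x \in \aff(\{z^0, \dots, z^m\})$; equivalently, $x - z^0$ lies in the linear span of $z^1 - z^0, \dots, z^m - z^0$. The certificate is the list of these at most $n+1$ integer points together with a proof that they lie in $Q$ (direct substitution into the inequality system) and that $x$ is an affine combination of them (solving a linear system). The only subtlety is a size bound: I must argue that $Q \cap \Z^n$ contains integer points of polynomially bounded encoding length that affinely span $\aff(Q_I)$. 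This follows from standard results on the existence of small-encoding integer points in rational polyhedra (e.g.\ the theory behind the integrality of $Q_I$ from \cite{Me} and the bounds in \cite[\S16--17]{sch}), so the certificate has polynomial size and can be checked in polynomial time.

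For $\NP$-hardness, the plan is to reduce from a known $\NP$-complete problem; the naming convention $\TSAT$ in the preamble strongly suggests a reduction from $\mathsf{3SAT}$. I would encode a $\mathsf{3SAT}$ instance by constructing a rational polyhedron $Q$ and a point $x$ such that $x \in \aff(Q_I)$ if and only if the formula is \emph{unsatisfiable} (or satisfiable, depending on how the gadget is set up). The natural idea is to design $Q$ so that $Q_I$ is low-dimensional (hence $\aff(Q_I)$ is a proper affine subspace) precisely when the integer points of $Q$ are confined to some hyperplane, and to arrange the combinatorics so that an integer point escaping that hyperplane corresponds exactly to a satisfying assignment. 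A clean way to do this is to let $Q$ be a relaxation whose integer points are essentially the $0/1$ assignments to the variables, intersected with the clause constraints, and to add one auxiliary coordinate that can be nonzero only at a feasible (satisfying) assignment; then $x$ is chosen to lie off the coordinate hyperplane, so that $x \in \aff(Q_I)$ forces the existence of an integer point with nonzero auxiliary coordinate, i.e.\ a satisfying assignment.

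The main obstacle, and the step I expect to require the most care, is the geometric design of the gadget so that the affine-hull condition \emph{exactly} captures satisfiability, rather than merely feasibility or convex-hull membership. The affine hull is a delicate object: $x \in \aff(Q_I)$ can hold even when $x \notin Q_I$, so I cannot simply reduce from integer feasibility. I would need to ensure two things simultaneously. First, if the formula is satisfiable, then $Q_I$ genuinely gains a dimension (there is an integer point with nonzero auxiliary coordinate in addition to the satisfying-assignment points in the base hyperplane), so that $\aff(Q_I)$ is large enough to contain $x$. Second, if the formula is unsatisfiable, then all integer points of $Q$ lie in the base hyperplane, forcing $\aff(Q_I)$ to stay within that hyperplane and thus to exclude $x$. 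Verifying the backward direction is the delicate part, because I must rule out \emph{any} integer point of $Q$ having a nonzero auxiliary coordinate, which requires the clause and assignment constraints to be tight enough that the auxiliary coordinate is forced to zero on all infeasible integer points. I would also need to confirm that the whole construction has polynomial encoding length and that $x$ is computable in polynomial time from the formula, completing the reduction.
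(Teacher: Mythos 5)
Your $\NP$-membership argument is essentially the paper's: exhibit polynomially many points of polynomially bounded encoding size whose affine hull equals $\aff(Q_I)$ (the paper gets them from \cite[Theorem~17.1]{sch}), and verify $x\in\aff(\{z^0,\dots,z^d\})$ by solving a linear system. Your reduction also starts from the paper's overall strategy: reduce from $\TSAT$, add one auxiliary coordinate $x_{n+1}$, keep a base hyperplane $x_{n+1}=0$, and query the point $x=e^{n+1}$ lying off that hyperplane.

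However, the gadget as you describe it has a genuine gap, and it sits in the direction you treat as unproblematic. You take the integer points at the base level to be ``the $0/1$ assignments \dots\ intersected with the clause constraints'' --- i.e.\ only the \emph{satisfying} assignments --- and then claim that if $\phi$ is satisfiable, $Q_I$ ``gains a dimension'' and is therefore ``large enough to contain $x$.'' That inference is false. With your gadget, when $\phi$ is satisfiable one gets $\aff(Q_I)=A\times\R$, where $A$ is the affine hull of the satisfying assignments; for $e^{n+1}\in\aff(Q_I)$ you need $\mathbf0\in A$, which in general fails. Concretely, if the clauses force $x_1=1$ in every satisfying assignment, then $\aff(Q_I)\subseteq\{x\in\R^{n+1}:x_1=1\}$, so $e^{n+1}\notin\aff(Q_I)$ even though $\phi$ is satisfiable, and your reduction answers incorrectly. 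Nor can you repair this by picking $x$ more cleverly: a guaranteed-correct choice of $x$ would require knowing the affine hull of the satisfying assignments, i.e.\ essentially solving the instance. The missing idea --- the heart of the paper's construction --- is to \emph{deactivate} the clause constraints on the base level: the paper uses $a^jx\ge b_j-4(1-x_{n+1})$ together with $0\le x_i\le 3-2x_{n+1}$, so that \emph{all} of $\{0,1\}^n\times\{0\}$ consists of integer points of $Q$, while the integer points with $x_{n+1}=1$ are exactly the satisfying assignments. Then $\aff(Q_I)$ always contains $\R^n\times\{0\}$, and it equals $\R^{n+1}$ precisely when $\phi$ is satisfiable, so the fixed point $e^{n+1}$ works. (By contrast, the direction you single out as delicate --- ruling out integer points with nonzero auxiliary coordinate at non-satisfying assignments --- is handled routinely by the bounds $0\le x_i\le 3-2x_{n+1}$, which force $x_{n+1}\in\{0,1\}$ and make the clause inequalities fully active at $x_{n+1}=1$.)
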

\begin{proof}
We first show that $\AFFIHULL \in \NP$. Let $(x,Q)$ be an arbitrary instance of problem $\AFFIHULL$. Let $d$ be the dimension of $Q$. Using \cite[Theorem~17.1]{sch} one can show the existence of vectors $z^0,\ldots,z^d\in\Q^n$ whose description size is polynomial in the description size of $Q$ and such that $\aff (Q_I) = \aff (\{z^0,\ldots,z^d\})$. It is known that there exists a polynomial algorithm which verifies, for given $x, z^0,\ldots,z^d\in\Q^n$, whether $x \in \aff(\{z^0,\ldots,z^d\})$ (in fact, the latter condition is reduced to solving a system of linear equalities). Thus the sequence $z^0,\ldots,z^d$ can be taken as the certificate, and we get $\AFFIHULL \in \NP$.

We conclude the proof by showing that $\TSAT$ is polynomially reducible to $\AFFIHULL$. Let $\phi$ be an arbitrary 3CNF formula with $n$ variables $x_1,\ldots,x_n$ and $m$ clauses. It is well-known that satisfiability of $\phi$ can be expressed by a system of linear inequalities. We interpret $x_1,\ldots,x_n$ as 0/1-variables. Each clause from $\phi$ generates a linear inequality. E.g., if $\phi$ contains the clause $x_1 \wedge x_2 \wedge \overline{x_3}$, then we introduce the inequality $x_1 + x_2 + (1-x_3) \ge 1$ (we proceed similarly for other possible clauses). In this way we construct inequalities $a^j x \ge b_j$ with $j \in \{1,\dots,m\}$, where $a^j \in \Z^n$ and $b_j \in \Z$. From the fact that each clause uses at most three variables we get:
	\begin{align}
		a^j x & \in \{-3,\ldots,3\} & & \forall x \in \{0,1\}^n \ \forall j \in \{1,\dots,m\}, \label{aj:x} \\
		b_j & \in \{-2,\ldots,1\} & & \forall j \in \{1,\dots,m\}. \label{bj}
	\end{align}
	We introduce an additional variable $x_{n+1}$ and define the following system for variables $x_1,\ldots,x_{n+1}$:
	\begin{empheq}{align}
			 a^j x & \ge b_j - 4 (1-x_{n+1})  & &\forall j \in \{1,\dots,m\}, \label{cond:1} \\
			0 & \le x_i \le 3 - 2 x_{n+1} & &\forall i \in \{1,\dots,n\}, \label{cond:2} \\
			0 & \le x_{n+1}, & & \label{cond:3}\\
			x_i & \in \Z & &\forall i \in \{1,\dots,n+1\}. \label{cond:4}
	\end{empheq}
Note that in \eqref{cond:1} $x$ denotes the vector
$(x_1,\ldots,x_n)$ (that is, the variable $x_{n+1}$ is not included as a
component).
	For the above system the following conditions can be verified in a straightforward way.
	\begin{enumerate}[(a)]
	 	\item Every solution of the system satisfies $x_{n+1} \in \{0,1\}$ (see \eqref{cond:2}, \eqref{cond:3} and \eqref{cond:4}).
		\item Each element of $\{0,1\}^n \times \{0\}$ is a solution of the system (in view of \eqref{aj:x} and \eqref{bj}).
		\item Every solution with $x_{n+1}=1$ lies in $\{0,1\}^n \times \{1\}$ (see \eqref{cond:2} and \eqref{cond:4}).
		\item The system has a solution lying in $\{0,1\}^n \times \{1\}$ if and only if the 3CNF formula $\phi$ is satisfiable (see \eqref{cond:1}).
	\end{enumerate}
	Let $Q$ be the rational polyhedron in $\R^{n+1}$ defined by \eqref{cond:1}, \eqref{cond:2} and \eqref{cond:3}. From (a), (b) and (c) we see that $\aff(Q_I)$ is either $\R^n \times \{0\}$ or $\R^{n+1}$. Thus, using (d), $\phi$ is satisfiable if and only if $e^{n+1} \in \aff(Q_I)$. This shows that $\TSAT$ is polynomially reducible to $\AFFIHULL$.
\end{proof}

Since the absolute values of the coefficients in the
system \eqref{cond:1}--\eqref{cond:4} are at most 4,
we have actually shown that $\AFFIHULL$ is $\NP$-complete even in the strong sense (i.e.,
also in the case where the integer values used to define instances of
$\AFFIHULL$ are represented in the unary encoding; see, e.g., \cite[\S4.2]{GarJohn}).

\end{document}